\def\E{{\mathbb{E}}}
\def\Eq{{\mathbb{E}
		_{x,\hat{x}}}}
\def\P{{\mathbb{P}}}
\def\Q{{\mathbb{Q}}}
\def \I{{\mathbb{I}}}
\def \F{{\mathcal{F}}}
\newcommand{\dx}{\operatorname{d}\!}
\newcommand{\bearno}{\begin{eqnarray*}}
	\newcommand{\enarno}{\end{eqnarray*}}
 \def\Xi{X^{\infty}}
\def\P{{\mathbb P}}   
\def\E{{\mathbb E}}   
 \def\Q{{\mathbb Q}}
\newtheorem{Thm}{Theorem}
\newtheorem{Lemma}{Lemma}
\newtheorem{Prop}{Proposition}
\newtheorem{Rem}{Remark} 
\title{PRICING AMERICAN OPTIONS TIME-CAPPED BY A DRAWDOWN EVENT IN A L\'EVY MARKET}
\author{Pawe\l\ St\c{e}pniak}
\address{Faculty of Pure and Applied Mathematics, Wroc\l aw University of Science and Technology, Wroc\l aw, Poland} \email{pawel.stepniak@pwr.edu.pl}
\author{Zbigniew Palmowski}
\address{Faculty of Pure and Applied Mathematics, Wroc\l aw University of Science and Technology, Wroc\l aw, Poland} \email{zbigniew.palmowski@pwr.edu.pl}
\thanks{
	This work is partially supported by National Science Centre, Poland, under grants
	No. 2021/41/B/HS4/00599.}
\date{\today}
\keywords{}
\begin{document}
	\begin{abstract}
		This paper presents a derivation of the explicit price for the perpetual American put option time-capped by the first drawdown epoch beyond a predefined level.
		We consider the market in which an asset price is described by geometric L\'evy process with downward exponential jumps.
		We show that the optimal stopping rule is the first time when the asset price gets below a special value.
		The proof relies on martingale arguments and the fluctuation theory of L\'evy processes. We also provide a numerical analysis.
		\vspace{3mm}
		
		\noindent {\sc Keywords.}
		Black-Scholes model  $\star$ pricing $\star$ American option $\star$ optimal stopping $\star$ cap $\star$ L\'evy process

	\end{abstract}
	
	\maketitle

	\pagestyle{myheadings} \markboth{\sc P.\ St\c{e}pniak --- Z.\ Palmowski} {\sc Time-capped American options}

	\vspace{1.8cm}
	
	\tableofcontents
	
	\newpage
	
	\section{Introduction}
	\subsection{Main results}

	In mathematical finance, there has been growing interest in extending classical American options to include more complex financial products.
	In this paper, we consider the perpetual American put option time-capped by the first drawdown epoch beyond a predefined level.
	Such instrument should appeal to investors due to their reduced liability and lower cost compared to standard options. Popularity of this type of financial instrument strongly depends though on
	understanding their pricing, hedging, and optimal exercise policies. The desire to understand these features was our motivation for this paper.
	
	This type of financial instrument compliments the capped options involving a cap to the asset price, where the option is terminated if the asset value exceeds a specified threshold.
	One of the simplest examples of capped options was introduced in 1991 by the Chicago Board of Options Exchange European options written on the S\&P 100 and S\&P 500 with a cap on their payoff function (see \cite{cap}).
	For other papers related to this type of cap, see \cite{Zaevski2, Zaevski2b} and references therein.
	
	This type of financial instrument is also strongly related to the deterministic cap (hence American option with finite maturity) and
	with other time-caps like the first passage time over given level of the asset price.
	
	In this paper, consider the asset price $S_t$ in the L\'evy-type market, that is, we assume that under the risk-neutral measure,
	\begin{equation*}
		S_t = e^{X_t},
	\end{equation*}
	with
	\begin{equation}\label{X_def}
		X_t = x + \mu t + \sigma B_t  - \sum_{k=1}^{N_t}U_k, 
	\end{equation}
	where $x = X_0 = \log s$ and $\sigma \geq 0$. In equation (\ref{X_def}), 
	$B_t$ denotes a Brownian motion, $N_t$ is a homogeneous Poisson process with intensity $\lambda$ and $\{U_k\}_{\{k\in \mathbb{N}\}}$ is a sequence of independent identically distributed exponential random variables with mean $\rho^{-1}$.
	We assume that $B_t$, $N_t$ and $\{U_k\}_{\{k\in \mathbb{N}\}}$ are mutually independent and that all processes considered live in a common filtered probability space $(\Omega, \F, \{\mathcal{F}_t\}_{\{t\geq 0\}}, \Q)$
	with natural filtration $\{\F_t\}_{\{t \geq 0\}}$ of $X_t$ satisfying the usual conditions. We allow $\lambda = 0$, which corresponds to the standard Black-Scholes model.
	For simplicity, we assume that no dividends are paid to the holders of the underlying asset.
	
	Although geometric Brownian motion hence the Black-Scholes market widely used in everyday applications is a special case of our model,
we add the possibility of the negative jumps in the asset price to better
fit the evolution of the stock price process to real data.
	In recent years, the empirical study of financial data reveals that the distribution
of the log-return of stock price exhibits features which cannot be captured by the normal
distribution such as heavy tails and asymmetry. With a view to replicate these features
more effectively and to reproduce a wide variety of implied volatility skews and smiles,
there has been a general shift in the literature to model a risky asset with a geometric
	L\'evy process analyzed in this paper rather than a geometric Brownian motion, see \cite{Schoutens} for details.
	We still keep in mind that there is no uniqe martingale measure in this set-up, so one can choose, for example, one minimizing the entropy; see e.g. \cite[Chap. 10.5]{Cont} for details and discussion.
	
	By
	\begin{equation*}
		\overline{S}_t = e^{\overline{x}}\vee \sup_{0 \leq u \leq t} S_u
	\end{equation*}
	we denote the running maximum of the asset price where
	$e^{\overline{x}}$ is the historical maximum of the underlying asset price prior to the beginning of the contract, where $d\vee f = \max\{d,f\}$. Similarly, we will denote $d\wedge f = \min\{d,f\}$.
	For the fixed threshold $c>0$ let
	\begin{equation*}
		\tau_D = \inf \left\{ t \geq 0: \frac{\overline{S}_t}{S_t} \geq e^c \right\}
	\end{equation*}
	be the first time when the (relative) drawdown is greater than $e^c$.
	In the main result of this paper we identify the closed-form formula for the price of the American put option with the random maturity determined to be a drawdown event given by
	\begin{equation*}
		V(x, \overline{x}) = \sup_{\tau \in \mathcal{T}} \E_{x,\overline{x}} \left[e^{-r\tau\wedge\tau_D}(K - S_{\tau\wedge\tau_D})^+\right],
	\end{equation*}
	for a family of stopping times $\mathcal{T}$ and fixed strike price $K > 0$.
	Above, the subindex $x,\overline{x}$ attached to the expectation $\E_{x,\overline{x}}$ underlines the dependence of the mean on
	the initial asset price $S_0=e^x$ and the historical (observed) supremum $\overline{S}_0=e^{\overline{x}}$. We will skip the subindex when $x=\overline{x}=0$.
	The expectation is taken with respect of measure $\Q$.
	When $x=0$, we will skip this index. We restrict the domain of $V$ to $\mathcal{D} := \{ (x, \overline{x}) \in \mathbb{R}^2 : x \leq \overline{x}\}$.
	In the main result, we also find the optimal exercise rule, that is, a stopping rule $\tau^*$ such that
	\begin{equation*}
		V(x, \overline{x}) = \E_{x,\overline{x}} [e^{-r\tau^*\wedge\tau_D}(K - S_{\tau^*\wedge\tau_D})^+].
	\end{equation*}
	Our main result is the following theorem.
	\begin{Thm}\label{thm2}
		The optimal stopping barrier is the first downward asset price time
		\begin{equation*}
			\tau^*=\inf\left\{t\geq 0: X_t\leq a^*\right\},
		\end{equation*}
		where $a^*$ is the unique solution of equation \eqref{otimala*}.
		Moreover, we have $V(x, \overline{x})=V_{a^*}(x, \overline{x})$
		for the function $V_{a^*}(x, \overline{x})$ identified in  Theorem \ref{valuea}.
	\end{Thm}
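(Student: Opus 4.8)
The plan is to run the classical guess‑and‑verify scheme of optimal stopping, using as a candidate the family of first‑passage strategies whose value has already been computed. By Theorem~\ref{valuea} we have in closed form
\[
V_a(x,\overline{x}) \;=\; \E_{x,\overline{x}}\!\left[e^{-r(\tau_a\wedge\tau_D)}\bigl(K-S_{\tau_a\wedge\tau_D}\bigr)^+\right],
\qquad \tau_a=\inf\{t\ge 0:\ X_t\le a\},
\]
and $a^*$ is, by definition, the unique root of the first‑order (smooth/continuous‑fit) condition \eqref{otimala*} obtained by maximising $a\mapsto V_a$. Since $\tau^*=\tau_{a^*}\in\mathcal{T}$, the bound $V_{a^*}(x,\overline{x})\le V(x,\overline{x})$ is immediate on $\mathcal{D}$. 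The entire content of the theorem is the reverse inequality $V_{a^*}\ge V$, which, once established, simultaneously proves $V=V_{a^*}$ and that $\tau^*$ attains the supremum.

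For the reverse bound I would invoke the standard verification lemma: it suffices to check that \textbf{(i)} $V_{a^*}(x,\overline{x})\ge (K-e^{x})^+$ for every $(x,\overline{x})\in\mathcal{D}$ (the candidate dominates the payoff), and \textbf{(ii)} the stopped, discounted process
\[
M_t:=e^{-r(t\wedge\tau_D)}\,V_{a^*}\bigl(X_{t\wedge\tau_D},\overline{X}_{t\wedge\tau_D}\bigr)
\]
is a càdlàg $\Q$‑supermartingale. Granting (i)–(ii), note $0\le M_t\le K$, so $M$ is a bounded supermartingale; for an arbitrary $\tau\in\mathcal{T}$ the optional sampling theorem together with (i) gives $V_{a^*}(x,\overline{x})\ge\E_{x,\overline{x}}[M_{\tau}]\ge\E_{x,\overline{x}}\!\bigl[e^{-r(\tau\wedge\tau_D)}(K-S_{\tau\wedge\tau_D})^+\bigr]$, and taking the supremum over $\tau$ yields $V_{a^*}\ge V$.

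The analytic core is (ii), for which I would split $\mathcal{D}$ into the continuation region $\mathcal{C}=\{(x,\overline{x})\in\mathcal{D}:\ x>a^*,\ \overline{x}-x<c\}$, the exercise region $\{x\le a^*\}$, and the drawdown set $\{\overline{x}-x=c\}$ on which $\tau_D$ fires. Applying the It\^o–Dynkin formula to $e^{-rt}V_{a^*}(X_t,\overline{X}_t)$ one has to verify four facts: (a) on $\mathcal{C}$ the integro‑differential equation $(\mathcal{A}-r)V_{a^*}=0$ holds, where $\mathcal{A}$ is the generator of $X$ including the jump integral — this is exactly why $V_{a^*}$ is assembled in Theorem~\ref{valuea} from the $r$‑harmonic (scale‑type) functions supplied by the fluctuation theory of the spectrally negative L\'evy process $X$; (b) on $\{x\le a^*\}$ one has $V_{a^*}(x,\overline{x})=K-e^{x}$ and, since $(e^{-rt}S_t)$ is a $\Q$‑martingale, $(\mathcal{A}-r)(K-e^{x})=-rK\le 0$; (c) the singular term carried by the reflecting boundary $\{x=\overline{x}\}$ (where $\overline{X}$ increases) has the right sign, i.e. the Neumann‑type inequality $\partial_{\overline{x}}V_{a^*}(x,\overline{x})\big|_{x=\overline{x}}\le 0$, which comes out of the construction in Theorem~\ref{valuea}; (d) the downward jumps — which can only move the process from $\mathcal{C}$ into $\{x\le a^*\}$ and never across $\overline{x}$ — produce a non‑local contribution whose sign is built into the formula for $V_{a^*}$. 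Combining (a)–(d), the drift and jump parts of $dM_t$ are non‑positive, giving the supermartingale property.

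The main obstacle is twofold. First, when $\sigma=0$ the function $V_{a^*}$ is only continuous (``continuous fit''), not $C^{1}$, at the boundary $x=a^*$, so the naive It\^o formula does not apply there. I would circumvent this by arguing that $e^{-r(t\wedge\tau_D\wedge\tau^*)}V_{a^*}(X_{t\wedge\tau_D\wedge\tau^*},\overline{X}_{t\wedge\tau_D\wedge\tau^*})$ is a genuine $\Q$‑martingale — it is, by the strong Markov property, the conditional expectation of the terminal reward defining $V_{a^*}$ — while $e^{-rt}(K-S_t)$ is a supermartingale after $\tau^*$ because $(\mathcal{A}-r)(K-e^{x})=-rK\le 0$; pasting the two pieces at $\tau^*$, where continuity of $V_{a^*}$ makes the splicing legitimate, yields (ii) without ever differentiating $V_{a^*}$ across $a^*$. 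Second, the domination inequality (i) is delicate near $x=a^*$: it reduces to showing that the explicit expression of Theorem~\ref{valuea} stays above $K-e^{x}$ there, and this is precisely where the defining equation \eqref{otimala*} enters — the smooth/continuous‑fit condition forces tangency of $V_{a^*}$ with $K-e^{x}$ at $a^*$, and the monotonicity of $a\mapsto V_a$ across the critical value (established while deriving \eqref{otimala*}, together with uniqueness of its root) upgrades this local tangency to the global inequality on all of $\mathcal{D}$. The remaining points — integrability for optional sampling, càdlàg regularity of $M$, and finiteness of the relevant exponents — are routine once one recalls the martingale normalisation $\psi(1)=r$ of the model and the bound $0\le V_{a^*}\le K$.
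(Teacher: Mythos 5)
Your proposal follows essentially the same route as the paper: take the closed-form $V_a$ from Theorem \ref{valuea}, fix $a^*$ by the smooth-fit equation \eqref{otimala*}, note that $V_{a^*}\le V$ is trivial since $\tau_{a^*}\in\mathcal{T}$, and obtain $V_{a^*}\ge V$ through a verification (supermartingale/HJB) argument that checks the generator equation in the continuation region, $(\mathcal{L}-r)(K-e^x)\le 0$ in the exercise region, the boundary behaviour in $\overline{x}$ on the diagonal $x=\overline{x}$, domination of the payoff, and smooth fit at $a^*$ — precisely the conditions of the paper's Lemma \ref{HJB} and the way it is applied in the proof of Theorem \ref{thm2}. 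The minor deviations (requiring only the one-sided inequality $\partial_{\overline{x}}V_{a^*}\le 0$ on the diagonal rather than the equality \eqref{gen_dom_cut}, and the martingale-pasting workaround proposed for $\sigma=0$, a case outside the scale-function formulas actually used) do not change the substance of the argument.
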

	
	\subsection{Literature overview}
	Random termination appears in the mathematical finance literature, for example, as the default time of a company (see, e.g., \cite[p. 27]{BieleckiRutkowski} or \cite{Mladen}) or
	as an asset-price-independent time cap following an exponential or Erlang distribution (see \cite{carr, Florin}. In some cases, the time cap is chosen to be unobservable; see \cite{gapeev1}.
	
	Time-cap is closely related to cancellable options, which are terminated early when a specific event occurs. In fact, the price of the stopped option consists of the value of the cancellable option and the discounted payoff (under the risk-neutral measure) at the event time, provided the event happens before maturity.
	Typically, this event is described as the first or last time when the underlying asset price
	reaches a specific threshold; see, e.g., \cite{1st_pass, algo} and references therein.
	Similar early termination features can be found in game options (like, e.g., Israeli options) where the seller has the right to terminate the contract early, subject to a fixed penalty paid to the buyer; see the seminal paper \cite{kifer}
	and subsequent works such as \cite{meyer, Zaevski2, Zaevski2b}.
	
	There are other studies addressing derivatives that closely resemble those analyzed in this paper.
	Egloff, Farkas and Leippold in \cite{time_constraints} price American options with
	stochastic stopping time constraints, where the exercise of the option is restricted to specific conditions being met.
	To some extent, our motivation for considering a time-capped option is very similar:
	we want to introduce an option that remains valid until is either terminated by the buyer or triggered by an event described above, whichever occurs first.
	However, the difference in pricing is crucial. In \cite{time_constraints} the buyer can only exercise the option
when a pre-specified condition, associated with the performance of the underlying asset, is satisfied and when it is possible to transform the constrained pricing problem into an unconstrained optimal stopping problem.
	For the time-capped American option considered in this paper, this idea cannot be realized.

	Several authors have also explored the concept of time-capping in American options.
	In \cite{trabelsi}, the random cap is a~first hitting time of a fixed barrier by
	the underlying asset price. Finally, there are other related works.
	For example, \cite{russian, random_put, ott} investigate Russian options that terminate when the stock price hits its running maximum for the last time, as well as American options that terminate when the stock price reaches a prespecified level for the last time.

	We choose the drawdown event in pricing our American option due to its importance for the financial markets aiming
	at minimizing potential losses for the seller.
	The interest in the drawdown process has arisen after recent financial crises. It has also been used as dynamic
risk measure or measure of relative regret, for example, the time-adjusted measure of performance known as the Calmar
ratio. Thus, the drawdown process has become very
	important among practitioners.
	
	The list of papers addressing contracts that incorporate drawdown or drawup feature is quite long; see,
	e.g. \cite{CZH, GZ, MA, ZP&JT2, ZP&JT, drawdownup2, PV, Sorn, Vec1,Vec2, drawdownup1, olympia}
	and references therein.
	
	In \cite{naszapraca} we analyzed the same problem, but only for the Black-Scholes market. The arguments given in this article requires much more complex analysis, though.

	\subsection{Organisation of the paper}
	This paper is organized as follows. In the next section, we give the proof of the main result. The proof is based on choosing a stopping rule and calculating the option value based on it. Next, a verification step via HJB system ensures that the chosen rule is indeed optimal and the achieved price is fair.
	In Section \ref{sec:num} we present a numerical analysis.

	\section{Proof of main result}
	\subsection{Basic facts and verification lemma}
	The proof of the main result is based on the following
	key observations.
	Their proofs are the same as in \cite{naszapraca} but we add them here for the completeness of the arguments.
	
	\begin{Prop}\label{thm1}
		The optimal stopping time $\tau^*$ is of the following form:
		\begin{equation*}
			\tau^* = \inf \{ t \geq 0: X_t \leq b(\overline{X_t}) \}
		\end{equation*}
		for some function $b$.
	\end{Prop}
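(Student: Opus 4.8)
The plan is to combine the standard machinery of optimal stopping with one monotonicity estimate tailored to the put payoff and to the way the drawdown maturity $\tau_D$ depends on the starting point. First I would record that $(X_t,\overline{X}_t)$ is a strong Markov process on $\mathcal{D}$, that $\tau_D$ is its first entrance time into the closed region $\{(x,\overline{x}):x\leq\overline{x}-c\}$, and that, writing $g(x):=(K-e^x)^{+}$ for the obstacle, the perpetual problem is time-homogeneous. Once one checks that $V$ is finite (indeed $0\leq V\leq K$) and continuous on $\mathcal{D}$ — arguing as in \cite{naszapraca} — general optimal stopping theory gives that the stopping region $\mathcal{S}:=\{(x,\overline{x})\in\mathcal{D}:V(x,\overline{x})=g(x)\}$ is closed, its complement $\mathcal{C}$ in $\mathcal{D}$ is open, and $\tau^{*}=\inf\{t\geq0:(X_t,\overline{X}_t)\in\mathcal{S}\}$ is optimal. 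Thus the entire content of the proposition is to show that for every level $\ell$ the section $\mathcal{S}_{\ell}:=\{x\leq\ell:(x,\ell)\in\mathcal{S}\}$ is a half-line $(-\infty,b(\ell)]$: then $b$ is the required function and $\tau^{*}=\inf\{t\geq0:X_t\leq b(\overline{X}_t)\}$.

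The two easy parts of the section are as follows. On $\{x\leq\overline{x}-c\}$ we have $\tau_D=0$, hence $V=g$ there, so this region lies in $\mathcal{S}$ and in particular $\mathcal{S}_{\ell}\neq\emptyset$. On $\{x\geq\log K\}$ we have $g=0<V$, since with positive probability $X$ reaches a level below $\log K$ before $\tau_D$, producing a strictly positive discounted payoff; hence $\{x\geq\log K\}\subseteq\mathcal{C}$. It therefore remains to prove that $\mathcal{S}_{\ell}$ is an interval, i.e.\ that if $(x_2,\ell)\in\mathcal{S}$ and $x_1<x_2$ then $(x_1,\ell)\in\mathcal{S}$.

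The heart of the argument is the estimate: for $(x_1,\overline{x}),(x_2,\overline{x})\in\mathcal{D}$ with $x_1<x_2$,
\[
V(x_1,\overline{x})-V(x_2,\overline{x})\leq e^{x_2}-e^{x_1}.
\]
To prove it I would write $X_t=X_0+L_t$ with $L_t:=\mu t+\sigma B_t-\sum_{k=1}^{N_t}U_k$, run the two starting points on one probability space with the same $L$ (so the $x_i$-copy is $X^{(x_i)}_t=x_i+L_t$, $S^{(x_i)}_t=e^{X^{(x_i)}_t}$), and set $\delta:=x_2-x_1>0$. Because each copy satisfies $\overline{X}_t-X_t=\big[(\overline{x}-X_0)\vee\sup_{u\leq t}L_u\big]-L_t$, which is non-increasing in $X_0$, the drawdown of the $x_2$-copy is pathwise no larger than that of the $x_1$-copy, whence $\tau_D^{(x_1)}\leq\tau_D^{(x_2)}$. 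Consequently an $\varepsilon$-optimal stopping time $\tau$ for $(x_1,\overline{x})$, which may be taken with $\tau\leq\tau_D^{(x_1)}$ (truncating at $\tau_D^{(x_1)}$ changes nothing), also satisfies $\tau\leq\tau_D^{(x_2)}$; using it in the $x_2$-problem, where $S_t^{(x_2)}=e^{\delta}S_t^{(x_1)}$, yields $V(x_2,\overline{x})\geq\E\big[e^{-r\tau}(K-e^{\delta}S_\tau^{(x_1)})^{+}\big]$. Now I would invoke the elementary inequality $(K-\beta s)^{+}\geq(K-s)^{+}-(\beta-1)s$ for $s\geq0$ and $\beta\geq1$ (with $\beta=e^{\delta}$), together with the martingale property of $e^{-rt}S_t$ under $\Q$, which by optional sampling of a non-negative supermartingale gives $\E[e^{-r\tau}S_\tau^{(x_1)}]\leq e^{x_1}$; combining,
\[
V(x_2,\overline{x})\geq\E\big[e^{-r\tau}(K-S_\tau^{(x_1)})^{+}\big]-(e^{\delta}-1)e^{x_1}\geq V(x_1,\overline{x})-\varepsilon-(e^{x_2}-e^{x_1}),
\]
and letting $\varepsilon\downarrow0$ proves the estimate.

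Given the estimate, the conclusion is immediate: fix $\ell$, suppose $(x_2,\ell)\in\mathcal{S}$; by the easy parts $x_2<\log K$, so $V(x_2,\ell)=K-e^{x_2}$, and for any $x_1<x_2$ the estimate gives $V(x_1,\ell)\leq(K-e^{x_2})+(e^{x_2}-e^{x_1})=K-e^{x_1}=g(x_1)$, while $V\geq g$ in general; hence $V(x_1,\ell)=g(x_1)$ and $(x_1,\ell)\in\mathcal{S}$. So $\mathcal{S}_{\ell}$ is a non-empty half-line bounded above by $\log K$, which is exactly the claim with $b(\ell):=\sup\mathcal{S}_{\ell}$. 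The step I expect to be the main obstacle is not this one but the set-up: verifying continuity of $V$ on $\mathcal{D}$ and the optimality of the first-entrance time of $\mathcal{S}$ for the degenerate two-dimensional state process $(X_t,\overline{X}_t)$ together with the path-dependent maturity $\tau_D$; the monotonicity estimate itself is short, its only delicate point being the optional-sampling bound $\E[e^{-r\tau}S_\tau]\leq S_0$ for a possibly unbounded $\tau$.
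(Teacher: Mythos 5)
Your proposal is correct and follows essentially the same route as the paper: identify the stopping region via general Markovian optimal stopping theory and establish the Lipschitz-type bound $V(x_1,\overline{x})-V(x_2,\overline{x})\le e^{x_2}-e^{x_1}$ by transplanting the (near-)optimal rule of the lower starting point into the higher-start problem, using the pathwise monotonicity of $\tau_D$ in the starting point and the supermartingale bound $\E\left[e^{-r\tau}S_\tau\right]\le S_0$, which is exactly the coupling argument the paper uses to show the sections of the stopping region are downward closed. The only blemish is your auxiliary claim $\{x\ge\log K\}\subseteq\mathcal{C}$, which fails on the absorbed set $\{x\le\overline{x}-c\}$ (where $V=g=0$) and, when $\lambda=0$ and $\overline{x}>\log K+c$, even off it; but in those cases the entire relevant section lies trivially in $\mathcal{S}$, so your conclusion that each section is a closed half-line, and hence the claimed form of $\tau^*$, is unaffected.
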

	\begin{proof}
		Let
		\begin{equation*}
			D = \{ (x,\overline{x})\in \mathbb{R}^2: V(x, \overline{x}) = (K - e^x)^+ \}.
		\end{equation*}
		Note that
		\begin{equation*}
			Z_t = (X_t,\overline{X}_t)
		\end{equation*}
		is a Feller process.
		By \cite[Thm. 2.7, p. 40 and (2.2.80), p. 49]{PS} it follows that $$\tau^*=\inf\{t\geq 0: X_t\in D\},$$
that is, $D$ a stopping region, where the option should be exercised immediately.
		Suppose there exist $(x,\overline{x}) \in D$. We additionally assume that $x < \log K$, otherwise the immediate payout is zero. Let $\tau_y$ and $\tau_x$ be the optimal stopping rule for starting point $(y, \overline{x})$ and $(x, \overline{x})$ respectively. Observe that if $y < x < \log(K)$, then for a chosen $c$, we have $\tau_D(y, \overline{x}) \leq \tau_D(x,\overline{x})$, where $\tau_D(x,\overline{x})$ denotes $\tau_D$ for the starting point $(x, \overline{x})$. Further, we have:
		\begin{align*}
			V(y, \overline{x}) - V(x, \overline{x}) &= \E_{y,\overline{x}}[e^{-r\tau_y\wedge\tau_D(y,\overline{x})}(K - e^{X_{\tau_y\wedge\tau_D(y,\overline{x})}})^+] - \E_{x,\overline{x}}[e^{-r\tau_x\wedge\tau_D(x,\overline{x})}(K - e^{X_{\tau_x\wedge\tau_D(x,\overline{x})}})^+] \\
			&\leq \E_{y,\overline{x}}[e^{-r\tau_y\wedge\tau_D(y,\overline{x})}(K - e^{X_{\tau_y\wedge\tau_D(y,\overline{x})}})^+] - \E_{x,\overline{x}}[e^{-r\tau_y\wedge\tau_D(y,\overline{x})}(K - e^{X_{\tau_y\wedge\tau_D(y,\overline{x})}})^+] \\
			& = \E[e^{-r\tau_y\wedge\tau_D(y,\overline{x})}(K - e^{y + X_{\tau_y\wedge\tau_D(y,\overline{x})}})^+] - \E[e^{-r\tau_y\wedge\tau_D(y,\overline{x})}(K - e^{x + X_{\tau_y\wedge\tau_D(y,\overline{x})}})^+] \\
			& \leq \E[e^{-r\tau_y\wedge\tau_D(y,\overline{x})}(K - e^{y + X_{\tau_y\wedge\tau_D(y,\overline{x})}})] - \E[e^{-r\tau_y\wedge\tau_D(y,\overline{x})}(K - e^{x + X_{\tau_y\wedge\tau_D(y,\overline{x})}})] \\
			& = (e^x - e^y)\E[e^{-r\tau_y\wedge\tau_D(y,\overline{x}) + X_{\tau_y\wedge\tau_D(y,\overline{x})}}] = e^x - e^y.
		\end{align*}
		Therefore we get
		\begin{equation*}
			V(y, \overline{x}) - V(x, \overline{x}) \leq  e^x - e^y = (K - e^y) - (K-e^x)
		\end{equation*}
		and further
		\begin{equation*}
			V(y, \overline{x}) \leq (K - e^y) \leq  (K - e^y)^+.
		\end{equation*}
		On the other hand the payoff function of the option cannot be higher than its value function, therefore
		\begin{equation*}
			V(y, \overline{x}) \geq  (K - e^y)^+.
		\end{equation*}
		This gives $(y, \overline{x}) \in D$. This leads to the conclusion that for a certain $\overline{x}$ the optimal stopping region can be achieved by the pair $(X_t, \overline{X_t})$ when $X_t$ drops down to some value $b(\overline{X_t})$ before it reaches its past maximum.
		
		The question remains whether the optimal stopping region can also be reached from below, when both $X_t$ and $\overline{X_t}$ hit a certain level for the first time. We will show that this scenario is not possible.
		Indeed, assume a contrario that there exists a threshold $b$ such that there exist $x$ and  $\overline{x} < b$ satisfying $(x, \overline{x}) \notin D$ and $(b,b) \in D$. Let us take a positive $\varepsilon$ such that $b - \varepsilon > \overline{x}$. Let $\vartheta$ be the first time that process $X_t$ reaches the level $b - \varepsilon$ from below, i.e.
		\begin{equation*}
			\vartheta = \inf \{ t > 0: X_t = b - \varepsilon \}
		\end{equation*}
		and let $\tau_b$ be the first time when process $X_t$  reaches the level $b$ from below. Clearly, we have $\vartheta < \tau_b$ and $e^{X_\vartheta} < e^{X_{\tau_b}}.$ Therefore
		\begin{equation*}
			\E_{x,\overline{x}}[e^{-r\vartheta\wedge\tau_D(x,\overline{x})}(K - e^{X_{\vartheta\wedge\tau_D(x,\overline{x})}})^+] \geq \E_{x,\overline{x}}[e^{-r\tau_b\wedge\tau_D(x,\overline{x})}(K - e^{X_{\tau_b\wedge\tau_D(x,\overline{x})}})^+],
		\end{equation*}
		which contradicts the assumption and completes the proof.
		
	\end{proof}
	
	We will also need the following verification lemma.
	By
	\begin{align*}
		\mathcal{L}&f(x, \overline{x}) = \left(r - \frac{\sigma^2}{2}\right) \frac{\partial}{\partial x} f(x, \overline{x}) +  \frac{\sigma^2}{2}\frac{\partial^2}{\partial x^2} f(x, \overline{x})  \\
		+ & \lambda\rho\int\limits_0^\infty \left( f(x-y, \overline{x}) - f(x, \overline{x}) \right) e^{-\rho y}\dx y \quad \text{for}\quad 0 < x < \overline{x},
	\end{align*}
	we denote the generator of the Markov process $(X_t, \overline{X}_t)$
and the domain of this generator includes the functions $f\in \mathcal{C}^2_0(\mathbb{R})$ such that
	\begin{align}
		& \frac{\partial}{\partial \overline{x}} f(x, \overline{x})  =  0  \quad \text{for}\quad  x = \overline{x}. \label{gen_dom_cut}
	\end{align}	
	\begin{Lemma}\label{HJB}
		Let $\hat{V}(x, \overline{x}): \mathbb{R}^2 \rightarrow \mathbb{R}$ be a function defined on $\mathcal{D} := \{ (x, \overline{x}) \in \mathbb{R}^2 : x \leq \overline{x}\}$.
		Assume that $\hat{V}(x, \overline{x}) \in \mathcal{C}^2_0(\mathbb{R})$ and that it fulfills condition \eqref{gen_dom_cut}.
		Assume that for some function $b$,
		\begin{align}
			(\mathcal{L} \hat{V} - r\hat{V})(x, \overline{x}) &= 0\quad \text{for}\ x> b(\overline{x}), \label{HJB1} \\
			(\mathcal{L} \hat{V} - r\hat{V})(x, \overline{x}) &\leq 0\quad \text{for}\ x\leq  b(\overline{x}),\label{HJB1b}
		\end{align}
		\begin{align}
			\hat{V}(x, \overline{x}) &= (K - e^x)^+\quad \text{for}\ x \leq b(\overline{x}), \label{HJB3}\\
			\hat{V}(x, \overline{x}) &> (K - e^x)^+\quad \text{for}\ x> b(\overline{x}), \label{HJB4}
		\end{align}
		\begin{align}
			\hat{V}(x, \overline{x}) \big|_{x = b(\overline{x})} &= (K - e^{b(\overline{x})})^+, \label{HJB6}  \\
			\frac{\partial}{\partial x} \hat{V}(x, \overline{x}) \big|_{x = b(\overline{x})} &= \frac{\partial}{\partial x} (K - e^x)^+ \big|_{x = b(\overline{x})} \quad \text{if}\ b(\overline{x})<\overline{x}-c.\label{HJB7}
		\end{align}
		Then $\hat{V}(x, \overline{x})\geq V(x, \overline{x})$.
	\end{Lemma}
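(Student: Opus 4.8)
The plan is the classical verification (supermartingale) argument, adapted to the degenerate two‑dimensional Markov process $Z_s=(X_s,\overline{X}_s)$ and to the drawdown clock. Fix $(x,\overline{x})\in\mathcal D$ and an arbitrary $\tau\in\mathcal T$, and put $\theta:=\tau\wedge\tau_D$. Since $\tau_D=\inf\{s\ge0:\overline{X}_s-X_s\ge c\}$, on the stochastic interval $[0,\theta)$ the process stays in the ``alive'' strip $\{(x',\overline{x}'):\overline{x}'-c<x'\le\overline{x}'\}$, on which $\mathcal L$ as displayed above is the generator of $Z$. Recall also that $\overline{X}$ is continuous, non‑decreasing and of locally bounded variation, and that $\d\overline{X}_s$ is carried by $\{s:X_s=\overline{X}_s\}$.

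First I would apply a change‑of‑variables formula — Itô--Meyer, in the ``local time on curves'' form so as to accommodate the free boundary if $\hat V$ is only $\mathcal C^1$ there — to $s\mapsto e^{-rs}\hat V(X_s,\overline{X}_s)$ along the jump‑diffusion $X$. Using that $\hat V\in\mathcal C^2_0$ is $\mathcal C^1$ across $x=b(\overline{x})$ by \eqref{HJB6}--\eqref{HJB7} (with $\mathcal C^2$ behaviour on each side) and satisfies \eqref{gen_dom_cut} at the diagonal $x=\overline{x}$, one obtains, for a localising sequence $\varrho_n\uparrow\infty$,
\begin{align*}
e^{-r(t\wedge\theta\wedge\varrho_n)}\hat V(Z_{t\wedge\theta\wedge\varrho_n})
&=\hat V(x,\overline{x})+\int_0^{t\wedge\theta\wedge\varrho_n}e^{-rs}\bigl(\mathcal L\hat V-r\hat V\bigr)(Z_s)\,\d s\\
&\qquad+\int_0^{t\wedge\theta\wedge\varrho_n}e^{-rs}\,\tfrac{\partial}{\partial\overline{x}}\hat V(Z_s)\,\d\overline{X}_s+M_{t\wedge\theta\wedge\varrho_n},
\end{align*}
where $M$ collects the Brownian integral $\int\sigma e^{-rs}\partial_x\hat V(Z_s)\,\d B_s$ and the compensated jump integral, both genuine martingales after localisation because $\hat V$ and $\partial_x\hat V$ are bounded. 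The $\d\overline{X}_s$–integral vanishes identically, as $\d\overline{X}_s$ is supported on $\{X_s=\overline{X}_s\}$ where $\partial_{\overline{x}}\hat V=0$ by \eqref{gen_dom_cut}; the local‑time term along $\{x=b(\overline{x})\}$ that the formula would a priori create is killed by the smooth‑fit identity \eqref{HJB7} on the relevant part of the boundary, the remainder never being visited on $[0,\theta)$ since $\tau_D$ intervenes first.

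Next I would note that the drift integrand is non‑positive throughout the alive strip: it equals $0$ where $X_s>b(\overline{X}_s)$ by \eqref{HJB1}, and is $\le0$ where $X_s\le b(\overline{X}_s)$ by \eqref{HJB1b} (there $\hat V=(K-e^{x})^+$, which — the relevant boundary lying below $\log K$ — is the smooth function $K-e^{x}$, so $(\mathcal L-r)\hat V$ is meant classically). Hence $s\mapsto e^{-r(s\wedge\theta\wedge\varrho_n)}\hat V(Z_{s\wedge\theta\wedge\varrho_n})$ is a supermartingale, and taking expectations gives $\hat V(x,\overline{x})\ge\E_{x,\overline{x}}[e^{-r(t\wedge\theta\wedge\varrho_n)}\hat V(Z_{t\wedge\theta\wedge\varrho_n})]$. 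Since $\hat V$ is bounded, letting $n\to\infty$ and then $t\to\infty$ by dominated convergence (with $r>0$, so that $e^{-r\theta}\hat V(Z_\theta)=0$ on $\{\theta=\infty\}$) yields $\hat V(x,\overline{x})\ge\E_{x,\overline{x}}[e^{-r\theta}\hat V(Z_\theta)]$. Finally \eqref{HJB3}--\eqref{HJB4} give $\hat V\ge(K-e^{\,\cdot\,})^+$ pointwise, whence $\hat V(x,\overline{x})\ge\E_{x,\overline{x}}[e^{-r\theta}(K-e^{X_\theta})^+]$; taking the supremum over $\tau\in\mathcal T$ gives $\hat V(x,\overline{x})\ge V(x,\overline{x})$.

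The hard part is the rigorous change‑of‑variables step: one must invoke a version of Itô's formula allowing $\hat V$ to be at worst $\mathcal C^1$ along the (generally moving) curve $x=b(\overline{x})$ and across the diagonal, verify that the two resulting local‑time terms drop out — the diagonal one via \eqref{gen_dom_cut}, the boundary one via \eqref{HJB7} together with the fact that the part of the boundary uncovered by \eqref{HJB7} is shielded by $\tau_D$ — and confirm that the Brownian and compensated‑jump integrals are true martingales, which follows by localisation and the boundedness of $\hat V$ and $\partial_x\hat V$. The remaining verifications are routine.
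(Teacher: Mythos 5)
Your argument is correct and follows essentially the same route as the paper: the change-of-variables formula for $e^{-rs}\hat V(X_s,\overline{X}_s)$, cancellation of the $\mathrm{d}\overline{X}$-integral and the boundary local-time term via \eqref{gen_dom_cut} and the smooth-fit conditions (the non-smooth-fit part of the boundary being irrelevant before $\tau_D$), non-positivity of the drift by \eqref{HJB1}--\eqref{HJB1b}, and the resulting supermartingale property. The only (minor) difference is in the conclusion: you finish directly by optional sampling at $\tau\wedge\tau_D$ together with the domination \eqref{HJB3}--\eqref{HJB4} and take the supremum over $\tau$, whereas the paper at that point appeals to the general superharmonic characterization of the value function from Peskir--Shiryaev.
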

	\begin{Rem}
		\rm Conditions \eqref{HJB6} and \eqref{HJB7} are the so-called smooth paste conditions of the value function. Note that the condition \eqref{HJB6} is mainly required
		to write \eqref{HJB7} which is used in the proof of Lemma \ref{HJB}.
	\end{Rem}
	\begin{proof}
		Due to the assumed smoothness of $\hat{V}$, the smooth paste conditions \eqref{HJB6} - \eqref{HJB7} and an appropriate version It\^{o}'s theorem
		(see \cite[p. 208]{EisenbaumKyprianou}) we have
		\begin{align*}
			&e^{-rt}\hat{V}(X_t, \overline{X}_t) =
			\hat{V}(x,\overline{x}) + \sigma\int_0^t e^{-r u} \frac{\partial}{\partial x} \hat{V}(X_u, \overline{X}_u) \dx B_u + \int_0^t e^{-r u} \frac{\partial}{\partial \overline{x}} \hat{V}(X_u, \overline{X}_u) \dx \overline{X}_u \nonumber\\
			&\quad + \int_0^t e^{-r u} \left( \mathcal{L}\hat{V}(X_u, \overline{X}_u) - r\hat{V}(X_u, \overline{X}_u) \right) \dx u\\&\quad  +\frac{1}{2}
			\int_0^t \left( \frac{\partial}{\partial x} \hat{V}(x, \overline{x}) \big|_{x = b(\overline{x})} - \frac{\partial}{\partial x} (K - e^x)^+ \big|_{x = b(\overline{x})}\right)\dx L(s), \nonumber
		\end{align*}
		where $L$ is a local time of the process $X - b(\overline{X})$ at 0. Observe that $\dx \overline{X}_u = \I\{ X_u = \overline{X}_u \}\dx X_u$.
		
		Now, requirement (\ref{gen_dom_cut}) guarantees that the integral over $\dx \overline{X}_u$ is zero. Similarly, the smooth-paste conditions make sure that the integral over the local time also vanishes. Finally, relations (\ref{HJB1}) and (\ref{HJB1b}) lead to the conclusion that the integral over $\dx u$ is non-positive. If we take the expectation of both sides, we get the following result
		\begin{align*}
			e^{-rt}&\E_{x,\overline{x}}\hat{V}(X_t, \overline{X}_t) = \hat{V}(x,\overline{x}) + \sigma \E_{x,\overline{x}} \int_0^t e^{-r u} \frac{\partial}{\partial x} \hat{V}(X_u, \overline{X}_u) \dx B_u \\
			&+  \E_{x,\overline{x}} \int_0^t e^{-r u} \left( \mathcal{L}\hat{V}(X_u, \overline{X}_u) - r\hat{V}(X_u, \overline{X}_u) \right) \dx u \leq \hat{V}(x,\overline{x})
		\end{align*}
		since the integral over Brownian motion is a zero-mean local martingale. Hence, according to the assumptions made, the process $e^{-rt\wedge \tau_D}\hat{V}(X_{t\wedge \tau_D}, \overline{X}_{t\wedge \tau_D})=e^{-rt\wedge \tau_D}\hat{V}(Z_{t\wedge \tau_D})$ is a supermartingale and $\hat{V}(x,\overline{x})$ is a superharmonic function that dominates the payout. Now, from \cite[(2.2.80), p. 49]{PS} we additionally know that $\hat{V}$ is also lower semi-continuous. It allows us to use \cite[Thm. 2.7, p. 40]{PS} and claim that it is the optimal solution to the considered stopping problem. Hence $\hat{V}(x, \overline{x})\geq V(x, \overline{x})$.
	\end{proof}
	\begin{Rem}\label{uwaga2}
		\rm
		By Proposition \ref{thm1} we know that the optimal stopping rule $\tau^*$
		is of the one-sided form. In the next step, we postulate that the optimal
		stopping boundary is even more specific, namely, that
		\begin{itemize}
			\item $b(\overline{x})=a^*$ for some optimal $a^*$ when $\overline{x} < a^* + c$;
			\item $b(\overline{x})=\overline{x}-c$ when $a^* + c <  \overline{x} < \log(K)+c$.
		\end{itemize}
		We will calculate the value function
		\begin{equation*}
			\hat{V}(x, \overline{x})=\E_{x,\overline{x}} [e^{-r\tau^*\wedge\tau_D}(K - S_{\tau^*\wedge\tau_D})^+]
		\end{equation*}
		for this postulated stopping rule  $\tau^*$ and we will show that all the assumptions of Lemma \ref{HJB}
		are satisfied for $\hat{V}$. Hence in this case
		$\hat{V}(x, \overline{x})\geq V(x, \overline{x})$  by  Lemma \ref{HJB} and $\hat{V}(x, \overline{x})\leq V(x, \overline{x})$ due to the fact that
we choose a specific stopping rule. Thus, $\hat{V}(x, \overline{x})=V(x, \overline{x})$ is a true value function
and $\tau^*$ is the optimal stopping rule.

		From general stopping theory applied to the Markov process $(S_{t\wedge \tau_D}, \overline{S}_{t\wedge \tau_D})$
		(see \cite[Thm. 2.7, p. 40 and (2.2.80), p. 49]{PS}) it follows that the stopping region is the set when the value function meets the payout function
		and hence it is unique which is due to the existence of the value function
		$V(x, \overline{x})$. In other words, our stopping region is unique as well.
		
	\end{Rem}
	
	\subsection{Value function $V_a$}
	According to Remark \ref{uwaga2} we will first identify the value function
	\begin{equation*}
		V_a(x, \overline{x}) = \E_{x,\overline{x}} \left[e^{-r\tau_a\wedge\tau_D}(K - S_{\tau_a\wedge\tau_D})^+\right],
	\end{equation*}
	where
	\begin{equation}\label{taua}
		\tau_a = \inf\{ t \geq 0: X_t \leq a \}.
	\end{equation}
	Then we choose $a$ in such a way to satisfy all the conditions of the verification Lemma \ref{HJB}
	are satisfied for $\hat{V}(x, \overline{x})=V_{a}(x, \overline{x})$.
	
	To realize the first goal, we introduce the so-called scale functions.
	We define a Laplace exponent of process $X_t$ as
	\begin{equation*}
		\Psi(\theta) = \frac{1}{t}\log\E e^{\theta X_t}=
		\mu\theta + \frac{\sigma^2\theta^2}{2} - \frac{\lambda\theta}{\theta + \rho}.
	\end{equation*}
	We assumed that $\Q$ is a risk-neutral measure and hence $e^{-rt}S_t$ is a $\Q$-local martingale which is equivalent to
	\begin{equation*}
		\label{Psi1}\Psi(1) = r
	\end{equation*}
	or that
	\begin{equation*}
		\mu = r - \frac{\sigma^2}{2} +  \frac{\lambda}{1+\rho}.
	\end{equation*}
	For $r \geq 0$ the so-called scale function is defined as a continuous function $W^{(r)}:[ 0,\infty ) \rightarrow [ 0,\infty ) $ such that:
	\begin{equation*}
		\int\limits_0^\infty e^{-\beta x}W^{(r)}(x)\dx x = \frac{1}{\Psi(\beta)-r}
	\end{equation*}
	which gives
	\begin{equation}\label{W_sum}
		W^{(r)}(x) = \sum_{i=1}^3 C_i e^{\gamma_i x};
	\end{equation}
	see for details \cite{ZP&PS}.
	The exponents $\gamma_i$ have the following form:
	\begin{align}
		\gamma_1 = 1, \qquad
		\gamma_{2/3} &= \frac{-1}{2(\rho\sigma^2 + \sigma^2)}\left(2\lambda + 2r + \rho^2\sigma^2 + \rho\sigma^2 + 2r\rho \pm 2\sqrt{\omega}\right), \label{eta2}
	\end{align}
	where
	\begin{equation*}
		\omega = \lambda^2 + \lambda (\rho+1)(2r+\rho\sigma^2) + (\rho+1)^2 \left(r - \frac{1}{2}\rho\sigma^2\right)^2.
	\end{equation*}
	Additionally, coefficients $C_i$ are given by:
	\begin{align*}
		C_1 = \frac{2(\gamma_1+\rho)}{\sigma^2(\gamma_1 - \gamma_2)(\gamma_1 - \gamma_3)},\quad 
		C_2 = \frac{2(\gamma_2+\rho)}{\sigma^2(\gamma_2 - \gamma_1)(\gamma_2 - \gamma_3)},
		\\
		C_3 = \frac{2(\gamma_3+\rho)}{\sigma^2(\gamma_3 - \gamma_1)(\gamma_3 - \gamma_2)}.
	\end{align*}
	With the first scale function we associate the second one given by
	\begin{equation}\label{Z_def}
		Z^{(r)}(x) = 1 + r\int\limits_{0}^{x} W^{(r)}(y)\dx y=\sum_{i=1}^3 \frac{rC_i}{\gamma_i} e^{\gamma_i x}.
	\end{equation}
	
	The value function $V_a$ for the stopping rule \eqref{taua} is given in the next theorem.
	\begin{Thm}\label{valuea}
		The following holds.
		
		For $x > a$ and $\overline{x} - x < c$:
		\begin{itemize}
			\item[(i)]
			If $\overline{x} < a+c$ then we have
			\[V_a(x,\overline{x})= V_1(x,\overline{x}) + V_2(x,\overline{x}) ( V_3(\overline{x}) +  V_4(\overline{x})(  V_5 + V_6 V_7  )   )\]
			where  $V_1$, $V_2$, $V_3$, $V_4$, $V_5$, $V_6$, $V_7$ are given in (\ref{V1}), (\ref{V2}), (\ref{V3}), (\ref{V4}), (\ref{V5}), (\ref{V6}), (\ref{V7}), respectively.
			\item[(ii)] If $a+c < \overline{x} < \log(K)+c$ then we have
			\[V_a(x,\overline{x})= V_{10}(x,\overline{x}) + V_{11}(x,\overline{x}) ( V_{12}(\overline{x}) +  V_{13}(\overline{x})V_7   )\]
			where  $V_{10}$, $V_{11}$, $V_{12}$, $V_{13}$ are given in (\ref{V10}), (\ref{V11}), (\ref{V12}), (\ref{V13}), respectively.
			\item[(iii)] If $\overline{x} > \log(K)+c$ then we have
			\[V_a(x,\overline{x})= V_{14}(x,\overline{x}) + V_{15}(x,\overline{x}) V_{16}(\overline{x})\]
			where  $V_{14}$, $V_{15}$, $V_{16}$ are given in (\ref{V14}), (\ref{V15}), (\ref{V16}), respectively.
		\end{itemize}
		For $x \leq a$ or $\overline{x} - x \geq c$:
		\begin{itemize}
			\item[(iv)] \[V_a(x,\overline{x})= \left( K - e^x\right)^+. \]
		\end{itemize}
	\end{Thm}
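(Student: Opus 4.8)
The plan is to obtain $V_a$ by direct computation using the fluctuation theory of the spectrally negative L\'evy process $X$, splitting the trajectory into the segment before the running maximum is next updated and the subsequent evolution started from a running maximum. Case (iv) is immediate: if $x\leq a$ then $\tau_a=0$, and if $\overline{x}-x\geq c$ then $\tau_D=0$, so in both cases $V_a(x,\overline{x})=(K-e^x)^{+}$. Assume therefore $a<x\leq\overline{x}$ and $\overline{x}-x<c$.

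First I would analyse the initial segment up to $T_{\overline{x}}:=\inf\{t\geq 0:X_t=\overline{x}\}$, the first instant $X$ returns to its starting maximum. Because $X$ has no positive jumps it reaches $\overline{x}$ continuously, and on $\{t<T_{\overline{x}}\}$ the running maximum stays equal to $\overline{x}$, so there $\tau_D=\inf\{t:X_t\leq\overline{x}-c\}$ is an ordinary downward passage of $X$. Consequently, before $T_{\overline{x}}$ the time $\tau_a\wedge\tau_D$ is exactly the exit time of $X$ from the interval $[\,a\vee(\overline{x}-c),\ \overline{x}\,]$; an exit through the top restarts the problem from a running maximum, while an exit through the bottom occurs either at $a$ (when $a>\overline{x}-c$) or at $\overline{x}-c$ (when $a<\overline{x}-c$), possibly with a strictly negative jump so that the excess $a-X_{\tau_a}$ (resp.\ $(\overline{x}-c)-X_{\tau_D}$) is $\mathrm{Exp}(\rho)$ by the lack-of-memory property. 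By the strong Markov property,
\begin{equation*}
V_a(x,\overline{x})=\E_{x,\overline{x}}\!\left[e^{-r(\tau_a\wedge\tau_D)}(K-e^{X_{\tau_a\wedge\tau_D}})^{+};\ \tau_a\wedge\tau_D<T_{\overline{x}}\right]+\E_{x,\overline{x}}\!\left[e^{-rT_{\overline{x}}};\ T_{\overline{x}}<\tau_a\wedge\tau_D\right]V_a(\overline{x},\overline{x}).
\end{equation*}
The two expectations are classical two-sided exit functionals, expressible through $W^{(r)}$ and $Z^{(r)}$ from \eqref{W_sum}--\eqref{Z_def}; since the negative jumps are exponential the overshoot below the lower level is again exponential, so the expected discounted payoff gathered at such a jump-exit integrates out to an elementary closed form. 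These computations produce the building blocks that appear as the functions $V_1,V_2$ (and, entering through $V_a(\overline{x},\overline{x})$, as $V_{10},V_{11}$ and $V_{14},V_{15}$) in the three regimes.

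The principal step is the evaluation of $F(m):=V_a(m,m)$, the value started from a running maximum $m$. From a running maximum the maximum grows continuously while $X$ makes excursions below it, and I would decompose according to the first \emph{fatal} excursion, i.e.\ the first one that either drags $X$ down to $a$ or realises a drawdown of size $c$. Denoting by $\ell\geq m$ the level of the running maximum at the onset of that excursion, excursion theory for the reflected process $\overline{X}-X$ --- equivalently the $r$-discounted downward passage identities associated with the scale functions \eqref{W_sum}--\eqref{Z_def}, see \cite{ZP&PS} --- gives the law of $\ell$ through the logarithmic derivative $W^{(r)\prime}/W^{(r)}$ at the relevant depth: while $\ell<a+c$ the only fatal outcome is hitting $a$ at depth $\ell-a<c$, whereas for $\ell>a+c$ the only fatal outcome is a drawdown of depth $c$ at level $\ell$, after which $X_{\tau_D}\leq\ell-c$ with an $\mathrm{Exp}(\rho)$ excess below $\ell-c$; and once $\ell-c>\log K$ the payoff $(K-e^{X_{\tau_D}})^{+}$ survives only when that excess pushes $X_{\tau_D}$ below $\log K$. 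These three geometric regimes for $\ell$ are precisely the ranges $\overline{x}<a+c$, $a+c<\overline{x}<\log K+c$ and $\overline{x}>\log K+c$ in the statement; integrating the explicit payoff against the explicit law of $\ell$ and of the jump excess, and matching constants by continuity of $F$ at $m=a+c$ and $m=\log K+c$, yields $F(m)$ as a combination of exponentials in $m$ --- the functions $V_3,\dots,V_7$, $V_{12},V_{13}$ and $V_{16}$ --- and substituting $F(\overline{x})$ into the identity above gives the nested formulas of (i)--(iii). As a cross-check one can instead note that on the continuation region $\{a<x<\overline{x},\ \overline{x}-x<c\}$ the function $V_a$ solves $(\mathcal{L}-r)V_a=0$, equals $(K-e^x)^{+}$ on $\{x\leq a\}\cup\{\overline{x}-x\geq c\}$, and satisfies the diagonal condition \eqref{gen_dom_cut}; writing $V_a(x,\overline{x})=\sum_{i=1}^{3}\alpha_i(\overline{x})e^{\gamma_i x}$ plus a particular solution and using that the exponential jump density makes the integral term in $\mathcal{L}$ explicit, one obtains a first order linear ODE system for the coefficients $\alpha_i(\overline{x})$ whose piecewise solution reproduces (i)--(iii).

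The hard part is the bookkeeping in this middle step: as $\overline{x}$ crosses the thresholds $a+c$ and $\log K+c$ one must track which obstacle --- the level $x=a$, the drawdown level $x=\overline{x}-c$, or the moneyness level $x=\log K$ --- is binding, and feed the exponential overshoot of the downward jumps correctly into every first-passage quantity. This is exactly where the analysis is heavier than in the Black--Scholes setting of \cite{naszapraca}, where $W^{(r)}$ has only two exponential terms and there is no overshoot to integrate out.
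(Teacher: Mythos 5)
Your plan follows essentially the same route as the paper: case (iv) is immediate, the value is split at the first return of $X$ to the running maximum via the strong Markov property (producing the two-sided exit factors $V_1,V_2$, resp.\ $V_{10},V_{11}$ and $V_{14},V_{15}$), and the value started from a running maximum is obtained from the law of the maximum at the fatal event --- exponential with rate $W^{(r)\prime}(c)/W^{(r)}(c)$ --- together with the creeping versus jump-overshoot split, which is exactly the drawdown identity from \cite{MP} that the paper invokes. The differences are only organizational (you integrate once over the level of the fatal excursion and match by continuity, whereas the paper factorizes sequentially at $\tau^{+}_{a+c}$ and $\tau^{+}_{\log K+c}$ and evaluates the $e^{X_\tau}$-terms by the Esscher change of measure rather than by direct integration of the exponential overshoot), so carrying out your computation reproduces the formulas $V_1$--$V_{16}$.
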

	\begin{proof}
		Assume first that $\overline{x} < a+c$. Then
		\begin{align*}
			V_a&(x,\overline{x}) = \Eq \left[ e^{-r\tau\wedge\tau_D} \left( K - e^{X_{\tau\wedge\tau_D}} \right)^+ \right] =  \Eq\left[ e^{-r\tau} \left( K - e^{X_\tau} \right) \I
			\{ \tau < \tau_{\overline{x}}^+ \} \right]  \\
			\nonumber & +  \Eq\left[ e^{-r\tau_{\overline{x}}^+} \I
			\{ \tau > \tau_{\overline{x}}^+ \} \right]\left( \E^\Q_{\overline{x}}\left[ e^{-r\tau} \left( K - e^{X_\tau} \right) \I
			\{ \tau < \tau_{a+c}^+ \} \right] + \E^\Q_{\overline{x}}\left[ e^{-r\tau_{a+c}^+} \I
			\{ \tau > \tau_{a+c}^+ \} \right]  \right. \\
			\nonumber &  \times \left(
			\E^\Q_{a+c}\left[ e^{-r\tau_D} \left( K - e^{X_{\tau_D}} \right) \I
			\{ \tau_D < \tau_{\log(K)+c}^+ \} \right] +
			\E^\Q_{a+c}\left[ e^{-r\tau_{\log(K)+c}^+}  \I
			\{ \tau_D > \tau_{\log(K)+c}^+ \} \right] \right. \\
			\nonumber & \times \left. \left.
			\E^\Q_{\log(K)+c}\left[ e^{-r\tau_D} \left( K - e^{X_{\tau_D}} \right)^+ \right]
			\right) \right) = V_1(x,\overline{x}) + V_2(x,\overline{x}) ( V_3(\overline{x}) +  V_4(\overline{x})(  V_5 + V_6 V_7  )   ).
		\end{align*}	
			From \cite[eq. (2.3)]{MP} we have that
			\begin{equation}\label{V2}
				V_2(x, \overline{x}) = \Eq \left[ e^{-r\tau^+_{\overline{x}}} \I \{ \tau^+_{\overline{x}} < \tau \} \right] = \frac{W^{(r)}(x-a)}{W^{(r)}(\overline{x}-a)}
			\end{equation}
			and
			\begin{equation}\label{V4}
				V_4(\overline{x}) = \E^\Q_{\overline{x}} \left[ e^{-r\tau^+_{a+c}} \I\{ \tau^+_{a+c} < \tau \} \right] = \frac{W^{(r)}(\overline{x}-a)}{W^{(r)}(c)}.
			\end{equation}
			
			From \cite{kyprianou2} we know that
			\begin{equation*}
				e^{X_t - \Psi(1)t}
			\end{equation*}
			is a martingale. To identify other terms, we introduce the following change of measure:
			\begin{equation*}
				\frac{\dx \mathbb{P}}{\dx \Q} \Big{|} _{\F_t} = \frac{e^{X_t - \Psi(1)t}}{e^x}.
			\end{equation*}
			Moreover, we know that $(X, \mathbb{P})$ is also a spectrally negative L\'evy process and its Laplace exponent is given by:
			\begin{equation}\label{psi_P}
				\Psi^\mathbb{P}(\theta) = \Psi(\theta+1) - \Psi(1)=\frac{\sigma^2}{2}\theta^2 + \theta(\mu + \sigma^2) - \frac{\lambda\rho\theta}{(\theta+1+\rho)(\rho+1)} = \frac{\sigma^2}{2}\theta^2 + \theta(\mu + \sigma^2) - \frac{\lambda\rho}{\rho+1}\frac{\theta}{\theta+(\rho+1)}.
			\end{equation}
			Therefore, under $(X,\P)$ the process $X$ has the same form \eqref{X_def} but with new parameters:
			\begin{align*}
				\tilde{\mu} = \mu + \sigma^2,\quad
				\tilde{\lambda} = \frac{\lambda\rho}{\rho+1},\quad
				\tilde{\rho} = \rho +1.
			\end{align*}
			Let us now define $W^{(0)}$ scale function for $(X, \mathbb{P})$. We will denote it by $W^\mathbb{P}$. That is,
			\begin{equation*}
				\int\limits_0^\infty e^{-\beta x}W^{\mathbb{P}}(x)\dx x = \frac{1}{\Psi^{\mathbb{P}}(\beta)} \quad \text{for $\beta>0$}.
			\end{equation*}
			By (\ref{psi_P}) observe that
			\begin{equation*}
				\int\limits_0^\infty e^{-\beta x}W^{\mathbb{P}}(x)\dx x = \frac{1}{\Psi(\beta + 1) - \Psi(1)} =  \frac{1}{\Psi(\beta + 1) - r} = \int\limits_0^\infty e^{-x}e^{-\beta x}W^{(r)}(x)\dx x.
			\end{equation*}
			This gives
			\begin{equation*}
				W^{\mathbb{P}}(x) = e^{-x}W^{(r)}(x)
			\end{equation*}
			and that
			\begin{equation}\label{W_P}
				W^{\P}(x) = \sum_{i=1}^3 \tilde{C}_i e^{\tilde{\gamma}_i x},
			\end{equation}
			where $\tilde{C}_i = C_i$ and $\tilde{\gamma}_i = \gamma_i - 1$ for $i=1,2,3$.
			Additionally, we have
			\begin{equation}\label{Z_P}
				Z^{\mathbb{P}}(x) = 1 + 0\int\limits_{0}^{x} W^{\mathbb{P}}(y)\dx y \equiv 1.
			\end{equation}
			%
			%
			%
			Now sing \cite[eq. (2.4)]{MP}, we get
			\begin{align}
				\nonumber	V_1(x, \overline{x}) &= \Eq\left[ e^{-r\tau} \left( K - e^{X_\tau} \right) \I
				\{ \tau < \tau_{\overline{x}}^+ \} \right]  = K\Eq \left[ e^{-r\tau} \I
				\{ \tau < \tau_{\overline{x}}^+ \} \right] - \Eq\left[ e^{-r\tau + X_\tau} \I
				\{ \tau < \tau_{\overline{x}}^+ \} \right] \\
				\nonumber & =  K\Eq \left[ e^{-r\tau} \I
				\{ \tau < \tau_{\overline{x}}^+ \} \right] - e^x \E^\P_{x, \overline{x}}\left[ \I
				\{ \tau < \tau_{\overline{x}}^+ \} \right] \\
				\nonumber & =  K\left( Z^{(r)}(x-a) - Z^{(r)}(\overline{x}-a)\frac{W^{(r)}(x-a)}{W^{(r)}(\overline{x}-a)} \right) -e^x\left( Z^{\P}(x-a) - Z^{\P}(\overline{x}-a)\frac{W^{\P}(x-a)}{W^{\P}(\overline{x}-a)} \right) \\
				\nonumber & = K\left( Z^{(r)}(x-a) - Z^{(r)}(\overline{x}-a)\frac{W^{(r)}(x-a)}{W^{(r)}(\overline{x}-a)} \right) - \left( e^x - \frac{e^x W^\P(x-a)}{W^\P(\overline{x}-a)} \right) \\
				& = K\left( Z^{(r)}(x-a) - Z^{(r)}(\overline{x}-a)\frac{W^{(r)}(x-a)}{W^{(r)}(\overline{x}-a)} \right) - \left( e^x - \frac{e^{\overline{x}} W^{(r)}(x-a)}{W^{(r)}(\overline{x}-a)} \right)\label{V1}
			\end{align}
			and
			\begin{align}
				\nonumber	V_3(\overline{x}) &= \E^\Q_{\overline{x}}\left[ e^{-r\tau} \left( K - e^{X_\tau} \right) \I
				\{ \tau < \tau_{a+c}^+ \} \right]  = K\E^\Q_{\overline{x}} \left[ e^{-r\tau} \I
				\{ \tau < \tau_{a+c}^+ \} \right] - \E^\Q_{\overline{x}}\left[ e^{-r\tau + X_\tau} \I
				\{ \tau < \tau_{a+c}^+ \} \right] \\
				\nonumber & =  K\E^\Q_{\overline{x}} \left[ e^{-r\tau} \I
				\{ \tau < \tau_{a+c}^+ \} \right] - e^x \E^\P_{\overline{x}}\left[ \I
				\{ \tau < \tau_{a+c}^+ \} \right] \\
				\nonumber &=  K\left( Z^{(r)}(\overline{x}-a) - \frac{Z^{(r)}(c)}{W^{(r)}(c)}W^{(r)}(\overline{x}-a) \right) -e^{\overline{x}}\left( Z^{\P}(\overline{x}-a) - \frac{Z^{\P}(c)}{W^{\P}(c)}W^{\P}(\overline{x}-a) \right) \\
				\nonumber & =K\left( Z^{(r)}(\overline{x}-a) - \frac{Z^{(r)}(c)}{W^{(r)}(c)}W^{(r)}(\overline{x}-a) \right) - \left( e^{\overline{x}} - \frac{e{\overline{x}} W^\P(\overline{x}-a)}{W^\P(c)} \right) \\
				& =K\left( Z^{(r)}(\overline{x}-a) - \frac{Z^{(r)}(c)}{W^{(r)}(c)}W^{(r)}(\overline{x}-a) \right) - \left( e^{\overline{x}} - \frac{e^{a+c} W^{(r)}(\overline{x}-a)}{W^{(r)}(c)} \right).\label{V3}
			\end{align}
			
			Moving on to $V_5$, observe that	
			\begin{align*}
				V_5 &= \E_{a+c} \left[ e^{-r\tau_D}\left( K - e^{X_{\tau_D}} \right) \I \{ \tau_D < \tau_{\log(K)+c}^+ \} \right] \\
				\nonumber & = K\E_{a+c} \left[ e^{-r\tau_D} \I \{ \tau_D < \tau_{\log(K)+c}^+ \} \right] - e^{a+c}\E^\P_{a+c} \left[ e^{-r\tau_D} \I \{ \tau_D < \tau_{\log(K)+c}^+ \} \right].
			\end{align*}

			Let us introduce the following notations to deal with the last terms
			\begin{align}
				&\eta^\Q = \frac{W^{(r)'}(c)}{W^{(r)}(c)}, \label{def_eta} \\
				\nonumber &F^\Q(y) = \eta^\Q e^{-y\eta^\Q},\quad y \in \mathbb{R}_+,  \\
				&\Delta^\Q = \frac{\sigma^2}{2}\left[ W^{(r)'}(c) - \frac{1}{\eta^{\Q}}W^{(r)''}(c) \right], \label{def_Delta} \\
				\nonumber &R(r,\dx y) = \left[\frac{1}{\eta^{\Q}} W^{(r)'}(y)\dx y - W^{(r)}(y)\dx y \right].
			\end{align}
			We will also need
			\begin{align*}
				\overline{X}_t =  \sup_{0 \leq u \leq t} X_u \ \vee\ \overline{x},  \quad
				\underline{X}_t =  \inf_{0 \leq u \leq t} X_u, \quad
				D_t = \overline{X}_t - X_t.
			\end{align*}
			By
			\begin{equation*}
				\Lambda(y-c-\dx h) = \lambda\rho e^{\rho(y-c-h)}\dx h, \ h \in (0, \infty),
			\end{equation*}
			we denote the L\'evy measure of the L\'evy process $X_t$.		
			Now, let us define the following two events:
			\begin{align*}
				A_o &= \{ \underline{X}_{\tau_D} \geq u;\ \overline{X}_{\tau_D} \in \dx v; D_{\tau_D-} \in \dx y; D_{\tau_D} - c \in \dx h \}, \\
				A_c &= \{ \underline{X}_{\tau_D} \geq u;\ \overline{X}_{\tau_D} \in \dx v; D_{\tau_D-} = c \}.
			\end{align*}
			The first one is associated with drawdown exceeding the threshold with a Poissonian jump, the latter is related to the hitting the threshold by creeping. From \cite[eq. (3.10, 3.11)]{MP} we have
			\begin{equation*}
				\E_x \left[ e^{-r\tau_D} \I_{A_o}\right] = \frac{W^{(r)} ((x-u)\wedge c)}{W^{(r)}(c)}F^\Q(v - (x\vee (u+c)))\dx v R(r,\dx y)	\Lambda(y-c-\dx h)
			\end{equation*}
and
			\begin{equation*}
				\E_x \left[ e^{-r\tau_D} \I_{A_c}\right] = \frac{W^{(r)} ((x-u)\wedge c)}{W^{(r)}(c)}F^\Q(v - (x\vee (u+c)))\Delta^\Q.
			\end{equation*}		
			We can represent events $A_o$ and $A_c$ as follows
			\begin{align*}
				A_o^5 = \{& \overline{X}_{\tau_D} \in \dx v,\ v \in [a+c, \log(K) +c); D_{\tau_D-} \in \dx y,\ y \in \left[ 0,c \right); \\ \nonumber & D_{\tau_D} - c \in \dx h,\ h \in (0, \infty)
				\},
			\end{align*}
			\begin{equation*}
				A_c^5 = \{ \underline{X}_{\tau_D} \geq a;\ \overline{X}_{\tau_D} \in \dx v,\ v \in [a+c, \log(K) +c);\  D_{\tau_D} =c \}.
			\end{equation*}
			For the first event, $A_o^5$, the first condition $\underline{X}_{\tau_D} \geq u$ disappears as the jump sizes are unbounded, allowing $\underline{X}_{\tau_D}$ to be arbitrarily small. Additionally, observe that no matter if we take $u=-\infty$ or $u=a$, we get $(a+c-u)\wedge c = c$ and $a+c \vee (u+c) = a+c$. As a result, we get $\frac{W^{(r)} ((x-u)\wedge a)}{W^{(r)}(a)} = 1$.
			This gives
			\begin{equation*}
				\E_{a+c} \left[ e^{-r\tau_D} \I \{ \tau_D < \tau_{\log(K)+c}^+ \} \right] = \int\limits_{a+c}^{\log(K)+c}F^\Q(v-(a+c))\dx v \left[ \Delta^\Q + \int\limits_{0}^{\infty}\int\limits_{0}^{c} R(r,\dx y) 	\Lambda(y-c-\dx h)  \right].
			\end{equation*}
			The first integral equals
			\begin{align*}
				\int\limits_{a+c}^{\log(K)+c}&F^\Q(v-(a+c))\dx v = \int\limits_{a+c}^{\log(K)+c} \eta^\Q e^{-(v-(a+c))\eta^\Q} \dx v \\
				\nonumber &= e^{(a+c)\eta^\Q}\left(  e^{-(a+c)\eta^\Q} - e^{-(\log(K)+c)\eta^\Q}\right) = 1 - e^{(a-\log(K))\eta^\Q}
			\end{align*}
			and the double integral from the square bracket is
			\begin{align}
				\nonumber \int\limits_{0}^{\infty}\int\limits_{0}^{c} &R(r,\dx y) 	\Lambda(y-c-\dx h) = \int\limits_{0}^{c}\int\limits_{0}^{\infty} \left[\eta^{\Q-1} W^{(r)'}(y) - W^{(r)}(y) \right] \lambda\rho e^{\rho(y-d-h)} \dx h \dx y \\
				\nonumber &= \lambda e^{-\rho c} \int\limits_{0}^{\infty} \rho e^{-\rho h} \dx h \int\limits_{0}^{c} e^{\rho y} \left[\eta^{\Q-1} W^{(r)'}(y) - W^{(r)}(y) \right] \dx y \\
				\nonumber &= \lambda e^{-\rho c} \int\limits_{0}^{\infty} \rho e^{-\rho h} \dx h \int\limits_{0}^{c} e^{\rho y} \left[\eta^{\Q-1} \sum_{i=1}^3 C_i \gamma_i e^{\gamma_i x} -\sum_{i=1}^3 C_i e^{\gamma_i x} \right] \dx y \\
				&= \lambda e^{-\rho c} \sum_{i=1}^3 \frac{C_i}{\gamma_i + \rho} \left( \frac{\gamma_i}{\eta^\Q}-1\right)\left(e^{c(\gamma_i+\rho)}-1\right) := \Gamma_\Q. \label{deriveGamma}
			\end{align}
			Observe that $\sum_{i=1}^3 \frac{C_i}{\gamma_i+\rho}=0$ since $C_i = \frac{2}{\sigma^2} \frac{\gamma_i + \rho}{(\gamma_i-\gamma_j)(\gamma_i-\gamma_k)}$ for $i,j,k \in \{1,2,3\},\ i\neq j, i\neq k, j\neq k$.
			Additionally,
			\begin{equation}\label{sum_Ci_0}
				\lambda e^{-\rho c} \sum_{i=1}^{3} \frac{C_i}{\gamma_i+\rho} \frac{\gamma_i}{\eta^\Q} = \lambda e^{-\rho c} \frac{1}{\eta^\Q}\sum_{i=1}^{3} \frac{C_i \gamma_i}{\gamma_i + \rho} = 0
			\end{equation}
			because
			\begin{align*}
				\sum_{i=1}^{3} &\frac{C_i \gamma_i}{\gamma_i + \rho} = \frac{2}{\sigma^2}\sum_{i=1}^{3} \frac{\gamma_i + \rho}{(\gamma_i-\gamma_j)(\gamma_i-\gamma_k)} \frac{\gamma_i}{\gamma_i + \rho} \\
				\nonumber	&= \frac{2}{\sigma^2} \left[ \frac{\gamma_1}{(\gamma_1-\gamma_2)(\gamma_1-\gamma_3)} + \frac{\gamma_2}{(\gamma_2-\gamma_1)(\gamma_2-\gamma_3)} + \frac{\gamma_3}{(\gamma_3-\gamma_1)(\gamma_3-\gamma_2)} \right] \\
				\nonumber	&= \frac{2}{\sigma^2(\gamma_1-\gamma_2)(\gamma_1-\gamma_3)(\gamma_2-\gamma_3)} [ \gamma_1(\gamma_2-\gamma_3) - \gamma_2(\gamma_1-\gamma_3) + \gamma_3(\gamma_1-\gamma_2) ] = 0.
			\end{align*}
			Therefore, we can simplify $\Gamma_\Q$ in the following way
			\begin{equation}\label{def_Gamma}
				\Gamma_\Q =	\lambda  \sum_{i=1}^3 \frac{C_i}{\gamma_i + \rho} \left( \frac{\gamma_i}{\eta^\Q} - 1\right)e^{\gamma_i c}.
			\end{equation}
			To handle the expected value on $\P$ measure, we need to introduce the similar notations as for $\Q$ measure:
			\begin{equation*}
				\eta^\P = \frac{W^{\P'}(c)}{W^{\P}(c)} = \frac{e^{-c}\left( W^{(r)'}(c) - W^{(r)}(c) \right)}{e^{-c}W^{(r)}(c)} = \eta^\Q - 1,
			\end{equation*}
			\begin{equation*}
				F^\P(y) = \eta^\P e^{-y\eta^\P},\quad y \in \mathbb{R}_+
			\end{equation*}
			and
			\begin{align*}
				\Delta&^\P = \frac{\sigma^2}{2}\left[ W^{\P'}(c) - \eta^{\P-1}W^{\P''}(c) \right] \\
				\nonumber &= \frac{\sigma^2}{2} e^{-c} \left( W^{(r)'}(c) - W^{(r)}(c) - \frac{W^{(r)}(c)\left( W^{(r)''}(c) - 2W^{(r)'}(c) + W^{(r)}(c) \right)}{W^{(r)'}(c) - W^{(r)}(c)} \right) \\
				\nonumber &= \frac{\sigma^2}{2} e^{-c} \left( W^{(r)'}(c) -  \frac{W^{(r)}(c)\left( W^{(r)''}(c) -W^{(r)'}(c)  \right)}{W^{(r)'}(c) - W^{(r)}(c)} \right) \\
				\nonumber &= \frac{\sigma^2}{2} e^{-c} \left( W^{(r)'}(c) -  \frac{ W^{(r)''}(c) -W^{(r)'}(c)  }{\eta^\Q-1} \right) = \frac{\eta^\Q}{\eta^\Q-1} \frac{\sigma^2}{2} e^{-c} \left( W^{(r)'}(c) - \frac{W^{(r)''}(c)}{\eta^\Q} \right) \\
				\nonumber &=  \frac{\eta^\Q}{\eta^\Q-1} e^{-c} \Delta^\Q
			\end{align*}
			Finally, let
			\begin{align*}
				\Gamma_\P &= \tilde{\lambda} e^{-\tilde{\rho} c} \sum_{i=1}^3 \frac{\tilde{C}_i}{\tilde{\gamma}_i + \tilde{\rho}} \left( \frac{\tilde{\gamma}_i}{\eta^\P}-1\right)\left(e^{c(\tilde{\gamma}_i+\tilde{\rho})}-1\right) \\
				\nonumber &= \lambda \frac{\rho}{\rho+1} e^{-c} e^{-\rho c} \sum_{i=1}^3 \frac{C_i}{\gamma_i + \rho} \left( \frac{\gamma_i-1}{\eta^\Q-1}-1\right)\left(e^{c(\gamma_i+\rho)}-1\right) \\
				\nonumber &=  \frac{\rho e^{-c}}{\rho+1} \frac{\eta^\Q}{\eta^\Q-1}  \lambda e^{-\rho c} \sum_{i=1}^3 \frac{C_i}{\gamma_i + \rho} \left( \frac{\gamma_i-1}{\eta^\Q}-\frac{\eta^\Q-1}{\eta^\Q}\right)\left(e^{c(\gamma_i+\rho)}-1\right) \\
				\nonumber &=  \frac{\rho e^{-c}}{\rho+1} \frac{\eta^\Q}{\eta^\Q-1}  \lambda e^{-\rho c} \sum_{i=1}^3 \frac{C_i}{\gamma_i + \rho} \left( \frac{\gamma_i-}{\eta^\Q}-1\right)\left(e^{c(\gamma_i+\rho)}-1\right) = \frac{\rho e^{-c}}{\rho+1} \frac{\eta^\Q}{\eta^\Q-1} \Gamma_\Q.
			\end{align*}

			By combining everything together, we get
			\begin{align}
				\nonumber V_5 &= K\E_{a+c} \left[ e^{-r\tau_D} \I \{ \tau_D < \tau_{\log(K)+c}^+ \} \right] - e^{a+c}\E^\P_{a+c} \left[ e^{-r\tau_D} \I \{ \tau_D < \tau_{\log(K)+c}^+ \} \right] \\
				\nonumber & = K\left[ \left(1 - e^{(a-\log(K))\eta^\Q}\right)\left( \Delta^\Q + \Gamma_\Q \right)  \right] - e^{a+c}\left[ \left(1 - e^{(a-\log(K))\eta^\P}\right)\left( \Delta^\P + \Gamma_\P \right)  \right] \\
				\nonumber & = K\left[ \left(1 - e^{(a-\log(K))\eta^\Q}\right)\left( \Delta^\Q + \Gamma_\Q \right)  \right] - e^{a+c}\left[ \left(1 - e^{(a-\log(K))\eta^\Q + \log(K)-a}\right)\left( \Delta^\P + \Gamma_\P \right)  \right] \\
				\nonumber & = \left[ \left(K - Ke^{(a-\log(K))\eta^\Q}\right)\left( \Delta^\Q + \Gamma_\Q \right)  \right] - \left[ \left(e^{a+c} - Ke^ce^{(a-\log(K))\eta^\Q}\right)\left( \Delta^\P + \Gamma_\P \right)  \right] \\
				\nonumber & = \left[ \left(K - Ke^{(a-\log(K))\eta^\Q}\right)\left( \Delta^\Q + \Gamma_\Q \right)  \right] - \left[ \left(e^{a} - Ke^{(a-\log(K))\eta^\Q}\right)\frac{\eta^\Q}{\eta^\Q-1}\left( \Delta^\Q + \frac{\rho}{\rho+1}\Gamma_\Q \right)  \right] \\
				& = Ke^{(a-\log(K))\eta^\Q}\frac{\Delta^\Q + \Gamma_\Q \frac{\rho+1-\eta^\Q}{\rho+1}}{\eta^\Q-1} - e^a \frac{\eta^\Q}{\eta^\Q-1}\left( \Delta^\Q + \frac{\rho}{\rho+1}\Gamma_\Q \right) + K\left( \Delta^\Q + \Gamma_\Q \right). \label{V5}
			\end{align}		
			Now, let us consider an event, when the first drawdown of the process $X_t$ starting from $a+c$ occurs after the process hits level $\log(K) + c$. We can then split the time until drawdown into two sub-intervals: from 0 to $\tau_{\log(K)+c}^+$ and from $\tau_{\log(K)+c}^+$ to $\tau_D$. As time before reaching $\log(K)+c$ is independent from time to first drawdown starting from $\log(K)+c$, we can write the following relation:
			\begin{equation}\label{V6_1st}
				V_6 = \E_{a+c}\left[ e^{-r\tau_{\log(K)+c}^+}  \I
				\{ \tau_D > \tau_{\log(K)+c}^+ \} \right] = \frac{\E_{a+c}\left[ e^{-r\tau_D}  \I
					\{ \sup_{t \in (\tau^+_{a+c},\tau_D)} X_t \geq \log(K)+c \} \right]}{\E_{\log(K)+c}\left[ e^{-r\tau_D}\right]} := \frac{V_9}{V_8}.
			\end{equation}	
			Let us first handle the denominator. Again, we want to specify events $A_o$ and $A_c$ for $V_8$:		
			\begin{align*}
				A_o^8 = \{& \overline{X}_{\tau_D} \in \dx v,\ v \in [\log(K)+c, \infty); D_{\tau_D-} \in \dx y,\ y \in \left[ 0,c \right); \\ \nonumber & D_{\tau_D} - c \in \dx h,\ h \in (0, \infty)
				\}
			\end{align*}
and
			\begin{equation*}
				A_c^8 = \{ \underline{X}_{\tau_D} \geq \log(K);\ \overline{X}_{\tau_D} \in \dx v,\ v \in [\log(K)+c,\infty);\  D_{\tau_D} =c \}.
			\end{equation*}
			With these, we get the formula for $V_8$ as follows
			\begin{equation*}
				V_8 = \int\limits_{\log(K)+c}^{\infty}F^\Q(v-(\log(K)+c))\dx v \left[ \Delta^\Q + \int\limits_{0}^{\infty}\int\limits_{0}^{c} R(r,\dx y) 	\Lambda(y-c-\dx h)  \right] = \Delta^\Q + \Gamma_Q
			\end{equation*}	
			as the first integral is equal to $1$ and the double integral is the same as one derived for in $V_5$ in (\ref{deriveGamma}). On the other hand, from formula (3.3) from \cite{MP} we have:
			\begin{equation*}
				V_8 = Z^{(r)}(c) - r\frac{W^{(r)}(c)^2}{W^{(r)'}(c)}
			\end{equation*}
			and therefore
			\begin{equation}\label{delta_gamma}
				\Delta^\Q + \Gamma_\Q = Z^{(r)}(c) - r\frac{W^{(r)}(c)^2}{W^{(r)'}(c)}.
			\end{equation}		
			Similarly, using the same argument for $\P$ counterparts and applying equality (\ref{Z_P}), we get
			\begin{equation*}
				\Delta^\P + \Gamma_\P = Z^{\P}(c) - 0\cdot\frac{W^{\P}(c)^2}{W^{\P'}(c)} = 1.
			\end{equation*}
			Now, observe that for $V_9$, event $A_o$ is the same as for $V_8$: 
			\begin{align*}
				A_o^9 =  A_o^8 = \{& \overline{X}_{\tau_D} \in \dx v,\ v \in [\log(K)+c, \infty); D_{\tau_D-} \in \dx y,\ y \in \left[ 0,c \right); \\ \nonumber & D_{\tau_D} - c \in \dx h,\ h \in (0, \infty)
				\}
			\end{align*}
and only the lower bound of $\underline{X}_{\tau_D}$ changes for $A_c$ in the following way
			\begin{equation*}
				A_c^9 = \{ \underline{X}_{\tau_D} \geq a;\ \overline{X}_{\tau_D} \in \dx v,\ v \in [\log(K)+c,\infty);\  D_{\tau_D} =c \}.
			\end{equation*}	
			
			Similarly to $V_8$, we get
			\begin{equation*}
				V_9 = \int\limits_{\log(K)+c}^{\infty}F^\Q(v-(a+c))\dx v \left[ \Delta^\Q + \int\limits_{0}^{\infty}\int\limits_{0}^{c} R(r,\dx y) 	\Lambda(y-c-\dx h)  \right] = e^{\eta^\Q(a-\log(K))}(\Delta^\Q + \Gamma_Q).
			\end{equation*}	
			Finally, by (\ref{V6_1st})
			\begin{equation}
				V_6 = \frac{V_9}{V_8} = e^{\eta^\Q(a-\log(K))}.\label{V6}
			\end{equation}
			
			For the last component of $V$, i.e. $V_7$, we only need the event $A_o$. It is impossible to get a non-zero payout from the option for the stock price starting from $Ke^c$, if the drawdown does not occur by a Poissonian jump. Hence in this case
			\begin{align}\label{A7}
				A_o^7 = \{& \overline{X}_{\tau_D} \in \dx v,\ v \in (\log(K)+c, \infty); D_{\tau_D-} \in \dx y,\ y \in \left[ 0,c \right); \\ \nonumber & D_{\tau_D} - c \in \dx h,\ h \in (v-c-\log(K), \infty)
				\}.
			\end{align}
			and then
			\begin{equation*}
				V_7 = \E_{\log(K)+c}\left[ e^{-r\tau_D} \left( K - e^{X_{\tau_D}} \right)^+ \right] = K \E_{\log(K)+c}\left[ e^{-r\tau_D} \I \{ A_o^7 \} \right] - Ke^c \E^\P_{\log(K)+c}\left[ \I \{ A_o^7 \} \right].
			\end{equation*}
			Furthermore, note that		
			\begin{align*}
				\E&_{\log(K)+c}\left[ e^{-r\tau_D} \I \{ A_o^7 \} \right] = \int\limits_{\log(K)+c}^{\infty} \int\limits_{v-c-\log(K)}^{\infty}\int\limits_{0}^{c}F^\Q(v-(\log(K)+c))   R(r,\dx y) 	\Lambda(y-c-\dx h)   \dx v
				\\
				\nonumber&= \int\limits_{\log(K)+c}^{\infty} \int\limits_{v-c-\log(K)}^{\infty}\int\limits_{0}^{c} \eta^\Q e^{-( v - (\log(K)+c) )\eta^\Q} \left[ \frac{W^{(r)'}(y)}{\eta^{\Q}} - W^{(r)}(y) \right] \lambda\rho e^{\rho(y-c-h)} \dx y \dx h \dx v
				\\
				\nonumber&=  \int\limits_{\log(K)+c}^{\infty} \eta^\Q  e^{-( v - (\log(K)+c) )\eta^\Q} \int\limits_{v-c-\log(K)}^{\infty} \rho e^{ - \rho h} \dx h \dx v \int\limits_{0}^{c} \lambda e^{\rho(y-c)} \left[ \frac{W^{(r)'}(y)}{\eta^{\Q}} - W^{(r)}(y) \right] \dx y
				\\
				\nonumber&= \Gamma_Q \int\limits_{\log(K)+c}^{\infty} \eta^\Q  e^{-( v - (\log(K)+c) )\eta^\Q} e^{\rho(\log(K)+c-v)} \dx v  = \Gamma_Q e^{\left(\rho + \eta^\Q\right)(\log(K)+c)}\frac{\eta^\Q}{\eta^\Q + \rho} \left[ -e^{-v\left(\eta^\Q + \rho\right)} \right]^{\infty}_{\log(K)+c} \\
				\nonumber&=  \frac{\eta^\Q \Gamma_\Q}{\eta^\Q + \rho} e^{\left(\rho + \eta^\Q\right)(\log(K)+c) - \left(\rho + \eta^\Q\right)(\log(K)+c)} =  \frac{\eta^\Q \Gamma_\Q}{\eta^\Q + \rho}.
			\end{align*}
			Similarly:
			\begin{equation*}
				\E^\P_{\log(K)+c}\left[ e^{-r\tau_D} \I \{ A_o^7 \} \right] = \frac{\eta^\P \Gamma_\P}{\eta^\P + \tilde{\rho}} = \frac{\left(\eta^\Q -1\right) \frac{\rho e^{-c}}{\rho+1} \frac{\eta^\Q}{\eta^\Q-1} \Gamma_\Q}{\eta^\Q - 1+ \rho + 1 } = \frac{\rho}{\rho+1}\frac{\eta^\Q \Gamma_\Q}{\eta^\Q + \rho}e^{-c}.
			\end{equation*}
			Finally, we have
			\begin{equation}
				V_7 = K\left( \frac{\eta^\Q \Gamma_\Q}{\eta^\Q + \rho} - \frac{\rho}{\rho+1}\frac{\eta^\Q \Gamma_\Q}{\eta^\Q + \rho}  \right) = \frac{K\eta^\Q\Gamma_\Q}{\left(\eta^\Q + \rho\right)(\rho+1)}.\label{V7}
			\end{equation}
			This completes the derivation of value function $V$ for $\overline{x} < a+c$.
			
			Let us now consider the case where $a+c \leq \overline{x} < \log(K) + c$. In this case we have
			\begin{align*}
				V_a&(x,\overline{x}) = \Eq \left[ e^{-r\tau\wedge\tau_D} \left( K - e^{X_{\tau\wedge\tau_D}} \right)^+ \right] =  \Eq\left[ e^{-r\tau_{\overline{x}-c}^-} \left( K - e^{X_{\tau_{\overline{x}-c}^-}} \right) \I
				\{ \tau_{\overline{x}-c}^- < \tau_{\overline{x}}^+ \} \right]  \\
				\nonumber & +  \Eq\left[ e^{-r\tau_{\overline{x}}^+} \I
				\{ \tau_{\overline{x}-c}^- > \tau_{\overline{x}}^+ \} \right]\left( \E_{\overline{x}}\left[ e^{-r\tau_D} \left( K - e^{X_{\tau_D}} \right) \I
				\{ \tau_D < \tau_{\log(K)+c}^+ \} \right] \right. \\
				\nonumber & \left. + \E_{\overline{x}}\left[ e^{-r\tau_{\log(K)+c}^+} \I
				\{ \tau_{\log(K+c)^+ < \tau_D} \} \right] \E_{\log(K)+c}\left[ e^{-r\tau_D} \left( K - e^{X_{\tau_D}} \right)^+ \right] \right) = V_{10}(x,\overline{x}) + V_{11}(x,\overline{x}) (V_{12}(\overline{x})  + V_{13}(\overline{x})V_7).
			\end{align*}
			Observe that $V_7$ appears in both cases when $\overline{x}$ is smaller or greater than $a+c$.
			
			Calculations of $V_{10}$ are similar to calculations of $V_3$:
			\begin{align}
				\nonumber V_{10}&(x,\overline{x}) = \Eq\left[ e^{-r\tau_{\overline{x}-c}^-} \left( K - e^{X_{\tau_{\overline{x}-c}^-}} \right) \I
				\{ \tau_{\overline{x}-c}^- < \tau_{\overline{x}}^+ \} \right] = K\E_x\left[ e^{-r\tau_{\overline{x}-c}^-} \I
				\{ \tau_{\overline{x}-c}^- < \tau_{\overline{x}}^+ \} \right] \\
				\nonumber &  - e^x \E^\P_x\left[  \I
				\{ \tau_{\overline{x}-c}^- < \tau_{\overline{x}}^+ \} \right] =  K\left( Z^{(r)}(x+c-\overline{x}) - \frac{Z^{(r)}(c)}{W^{(r)}(c)}W^{(r)}(x+c-\overline{x}) \right) \\
				\nonumber &  -e^{\overline{x}}\left( Z^{\P}(x+c-\overline{x}) - \frac{Z^{\P}(c)}{W^{\P}(c)}W^{\P}(x+c-\overline{x}) \right) \\
				& = K\left( Z^{(r)}(x+c-\overline{x}) - \frac{Z^{(r)}(c)}{W^{(r)}(c)}W^{(r)}(x+c-\overline{x}) \right) - \left( e^x - \frac{e^{\overline{x}}W^{(r)}(x+c-\overline{x})}{W^{(r)}(c)} \right). \label{V10}
			\end{align}
			
			Now, similarly to $V_2$ and $V_4$, we have:
			\begin{equation}\label{V11}
				V_{11}(x, \overline{x}) = \Eq\left[ e^{-r\tau_{\overline{x}}^+} \I
				\{ \tau_{\overline{x}-c}^- > \tau_{\overline{x}}^+ \} \right] = \frac{W^{(r)}(x+c-\overline{x})}{W^{(r)}(c)}.
			\end{equation}
			
			The term $V_{12}$ can be identified in a similar way as it was done for $V_5$. We introduce
			\begin{align*}
				A_o^{12} = \{& \overline{X}_{\tau_D} \in \dx v,\ v \in (\overline{x}, \log(K)+c); D_{\tau_D-} \in \dx y,\ y \in \left[ 0,c \right); \\ \nonumber & D_{\tau_D} - c \in \dx h,\ h \in (0, \infty)
				\}
			\end{align*}
			and
			\begin{equation*}
				A_c^{12} = \{ \underline{X}_{\tau_D} \geq \overline{x}-c;\ \overline{X}_{\tau_D} \in \dx v,\ v \in (\overline{x},\log(K)+c);\  D_{\tau_D} =c \}.
			\end{equation*}
			Then
			\begin{align}
				\nonumber V_{12}&(\overline{x}) = \E_{\overline{x}}\left[ e^{-r\tau_D} \left( K - e^{X_{\tau_D}} \right) \I
				\{ \tau_D < \tau_{\log(K)+c}^+ \} \right] \\
				\nonumber & = K\E_{\overline{x}}\left[ e^{-r\tau_D} \I
				\{ \tau_D < \tau_{\log(K)+c}^+ \} \right] - e^{\overline{x}}\E^\P_{\overline{x}}\left[  \I
				\{ \tau_D < \tau_{\log(K)+c}^+ \} \right] \\
				\nonumber & = K\int\limits^{\log(K)+c}_{\overline{x}}F^\Q(v-\overline{x})\dx v \left( \Delta^\Q + \Gamma_Q \right)
				- e^{\overline{x}} \int\limits^{\log(K)+c}_{\overline{x}}F^\P(v-\overline{x})\dx v \left( \Delta^\P + \Gamma_P \right)
				\\
				\nonumber & = K \left(1 - e^{-(\log(K)+c-\overline{x})\eta^\Q}\right)\left( \Delta^\Q + \Gamma_\Q \right)   - e^{\overline{x}}\left[ \left(1 - e^{-(\log(K)+c-\overline{x})\eta^\P}\right)\left( \Delta^\P + \Gamma_\P \right)  \right]
				\\
				\nonumber & = K \left(1 - e^{-(\log(K)+c-\overline{x})\eta^\Q}\right)\left( \Delta^\Q + \Gamma_\Q \right)   - e^{\overline{x}}\left[ \left(1 - e^{-(\log(K)+c-\overline{x})\eta^\Q} Ke^{c-\overline{x}} \right)\frac{\eta^\Q e^{-c}}{\eta^\Q-1}\left( \Delta^\Q + \frac{\rho}{\rho+1}\Gamma_\Q \right)  \right]
				\\
				\nonumber & = K \left(1 - e^{-(\log(K)+c-\overline{x})\eta^\Q}\right)\left( \Delta^\Q + \Gamma_\Q \right)   - \frac{\eta^\Q}{\eta^\Q-1}\left[ \left(e^{\overline{x}-c} - Ke^{-(\log(K)+c-\overline{x})\eta^\Q}  \right)\left( \Delta^\Q + \frac{\rho}{\rho+1}\Gamma_\Q \right)  \right]
				\\
				& = K \left(1 + \frac{e^{-(\log(K)+c-\overline{x})\eta^\Q}}{\eta^\Q-1}\right)\left( \Delta^\Q + \Gamma_\Q \right)   - \frac{\eta^\Q}{\eta^\Q-1}\left[ e^{\overline{x}-c}\Delta^\Q + \frac{\Gamma_Q}{\rho+1}\left( \rho e^{\overline{x}-c} +  Ke^{-(\log(K)+c-\overline{x})\eta^\Q} \right)  \right]. \label{V12}
			\end{align}
			Following the analysis of $V_6$ changing the starting point of $X_t$ from $\overline{x}$ to $a+c$ we can write
			\begin{equation}\label{V13}
				V_{13} = \E_{\overline{x}}\left[ e^{-r\tau_{\log(K)+c}^+} \I
				\{ \tau_{\log(K+c)^+ < \tau_D} \} \right] = e^{-\eta^\Q (\log(K) + c - \overline{x})}.
			\end{equation}

			Now, let us consider the case when $\overline{x} \geq \log(K) + c$. Then
			\begin{align}\label{V_3}
				V_a&(x,\overline{x}) = \Eq \left[ e^{-r\tau\wedge\tau_D} \left( K - e^{X_{\tau\wedge\tau_D}} \right)^+ \right] =  \Eq\left[ e^{-r\tau_D} \left( K - e^{X_{\tau_D}} \right)^+ \I
				\{ \tau_D < \tau_{\overline{x}}^+ \} \right]
				\\
				\nonumber &+ \Eq\left[ e^{-r\tau_{\overline{x}}^+} \I
				\{\tau_{\overline{x}}^+ < \tau_{\overline{x}+c}^- \} \right]  \E_{\overline{x}}\left[ e^{-r\tau_D} \left( K - e^{X_{\tau_D}} \right)^+ \right] = V_{14}(x,\overline{x}) + V_{15}(x,\overline{x})V_{16}(\overline{x}).
			\end{align}
			Observe that in this case
			\begin{equation*}
				\Eq\left[ e^{-r\tau_D}
				\I \{ \tau_D < \tau_{\overline{x}}^+ \}
				\I \{ K > e^{X_{\tau_D}}\}
				\right] = \E_x \left[ e^{-r\tau_{\overline{x}-c}^- }
				\I \{ \tau_{\overline{x}-c}^- < \tau_{\overline{x}}^+ \}
				\I \{ X_{\tau_{\overline{x}-c}^- } < \log(K)\}
				\right].
			\end{equation*}
			
			To calculate this expected value, we will use the following Gerber-Shiu measure
			\begin{equation*}
				K^{(r)}(a,x,\dx y, \dx z) =	\E_x \left[ e^{-r\tau_0^-} ;\ -X_{\tau_0^-} \in \dx y;\ X_{\tau_0^- -} \in \dx z; \tau_0^- < \tau_a^+ \right]
			\end{equation*}
			for $x,z \in [0,a]$ and $y \geq 0$. By \cite[Thm. 5.5]{kyprianou} we have
			\begin{equation*}
				K^{(r)}(a,x,\dx y, \dx z) = \frac{W^{(r)}(x)W^{(r)}(a-z) - W^{(r)}(a)W^{(r)}(x-z)}{W^{(r)}(a)}\Lambda(z + \dx y)\dx z.
			\end{equation*}
			Above, the Gerber-Shiu measure is associated with the first downward crossing of $0$.
			The proof of \cite[Thm. 5.5]{kyprianou} is given for the classical Cramer-Lundberg risk process, but it remains true without any changes for our L\'evy process
			$X_t$ given in \eqref{X_def}.
			We are, on the other hand, interested in the first time when the process starting from $x$ hits the level $\overline{x}-c$. Using the stationarity and independence of increments of L\'evy processes, we can shift our starting point from $x$ to $x+c-\overline{x}$ and thus we can search for the first time when $X_t$ becomes negative. Additionally, to make the condition $\{ X_{\tau_{\overline{x}-c}^- } < \log(K)\}$ true, we allow the undershoot $y$ of our shifted process to take values from set $(\overline{x}-\log(K)-c,\ \infty)$. Thus, we get that
			\begin{align*}
				\E&_x \left[ e^{-r\tau_{\overline{x}-c}^- }
				\I \{ \tau_{\overline{x}-c}^- < \tau_{\overline{x}}^+ \}
				\I \{ X_{\tau_{\overline{x}-c}^- } < \log(K)\}
				\right] \\
				\nonumber&=\! \int\limits_{(\overline{x}-\log(K)-c}^{\infty} \int\limits_{0}^{c}  \lambda \rho e^{-\rho (y+z)} \frac{W^{(r)}(x+c-\overline{x})W^{(r)}(c-z) - W^{(r)}(c)W^{(r)}(x+c-\overline{x}-z)}{W^{(r)}(c)}\dx z \dx y.
			\end{align*}
			Calculation of this double integral can be separated into three smaller single integrals. Firstly,
			\begin{equation*}
				\int\limits_{(\overline{x}-\log(K)-c}^{\infty} \rho e^{-\rho y} \dx y = e^{\rho (\log(K)+c - \overline{x})}.
			\end{equation*}
			Next,
			\begin{align*}
				\int\limits_{0}^{c}& e^{-\rho z} W^{(r)}(c-z) \dx z = \sum_{i=1}^3 C_i \int\limits_{0}^{c} e^{\gamma_i c - z(\gamma_i + \rho)} \dx z = \sum_{i=1}^3 C_i e^{\gamma_i c} \int\limits_{0}^{c} e^{ - z(\gamma_i + \rho)} \dx z \\
				\nonumber & = \sum_{i=1}^3 C_i e^{\gamma_i c} \frac{1 - e^{-c(\gamma_i+\rho)}}{\gamma_i + \rho}.
			\end{align*}
			Analogically, we have
			\begin{equation*}
				\int\limits_{0}^{c} e^{-\rho z} W^{(r)}(x+c-\overline{x}-z) \dx z = \int\limits_{0}^{x+c-\overline{x}} e^{-\rho z} W^{(r)}(x+c-\overline{x}-z) \dx z = \sum_{i=1}^3 C_i e^{\gamma_i c} \frac{1 - e^{-(x+c-\overline{x})(\gamma_i+\rho)}}{\gamma_i + \rho}
			\end{equation*}
			because $W^{(r)}(x) = 0$ for $x < 0$. Combining everything together, we get
			\begin{align*}
				\E&_x \left[ e^{-r\tau_{\overline{x}-c}^- }
				\I \{ \tau_{\overline{x}-c}^- < \tau_{\overline{x}}^+ \}
				\I \{ X_{\tau_{\overline{x}-c}^- } < \log(K)\}
				\right] = \lambda e^{\rho (\log(K)+c - \overline{x})} \\
				\nonumber &\times \sum_{i=1}^3 C_i e^{\gamma_i c} \left[ \frac{W^{(r)}(x+c-\overline{x})}{W^{(r)}(c)}\frac{1 - e^{-c(\gamma_i+\rho)}}{\gamma_i + \rho} - e^{\gamma_i(x - \overline{x})}\frac{1 - e^{-(x+c-\overline{x})(\gamma_i+\rho)}}{\gamma_i + \rho} \right].
			\end{align*}
			In order to calculate $\Eq\left[ e^{-r\tau_D} \left( K - e^{X_{\tau_D}} \right)^+ \I
			\{ \tau_D < \tau_{\overline{x}}^+ \} \right]$ we also need to consider the following equality
			\begin{align*}
				\E&_x \left[ e^{X_{\tau_{\overline{x}-c}^- }-r\tau_{\overline{x}-c}^- }
				\I \{ \tau_{\overline{x}-c}^- < \tau_{\overline{x}}^+ \}
				\I \{ X_{\tau_{\overline{x}-c}^- } < \log(K)\}
				\right] = e^x\E^\P_x \left[
				\I \{ \tau_{\overline{x}-c}^- < \tau_{\overline{x}}^+ \}
				\I \{ X_{\tau_{\overline{x}-c}^- } < \log(K)\}
				\right] \\
				&= e^x\tilde{\lambda} e^{\tilde{\rho} (\log(K)+c - \overline{x})} \sum_{i=1}^3 \tilde{C}_i e^{\tilde{\gamma}_i c} \left[ \frac{W^{\P}(x+c-\overline{x})}{W^{\P}(c)}\frac{1 - e^{-c(\tilde{\gamma}_i+\tilde{\rho})}}{\tilde{\gamma}_i + \tilde{\rho}} - e^{\tilde{\gamma}_i(x - \overline{x})}\frac{1 - e^{-(x+c-\overline{x})(\tilde{\gamma}_i+\tilde{\rho})}}{\tilde{\gamma}_i + \tilde{\rho}} \right] \\
				& = \frac{K\rho}{\rho + 1} \lambda e^{\rho (\log(K)+c - \overline{x})} \sum_{i=1}^3 C_i e^{\gamma_i c} \left[ \frac{W^{(r)}(x+c-\overline{x})}{W^{(r)}(c)}\frac{1 - e^{-c(\gamma_i+\rho)}}{\gamma_i + \rho} - e^{\gamma_i(x - \overline{x})}\frac{1 - e^{-(x+c-\overline{x})(\gamma_i+\rho)}}{\gamma_i + \rho} \right].
			\end{align*}
			Finally, observe that
			\begin{align}
				\nonumber V_{14}(x,\overline{x}) &= \Eq\left[ e^{-r\tau_D} \left( K - e^{X_{\tau_D}} \right)^+ \I
				\{ \tau_D < \tau_{\overline{x}}^+ \} \right]  =  K \E_x \left[ e^{-r\tau_{\overline{x}-c}^- }
				\I \{ \tau_{\overline{x}-c}^- < \tau_{\overline{x}}^+ \}
				\I \{ X_{\tau_{\overline{x}-c}^- } < \log(K)\}
				\right] \\
				\nonumber & - e^x \E^\P_x \left[
				\I \{ \tau_{\overline{x}-c}^- < \tau_{\overline{x}}^+ \}
				\I \{ X_{\tau_{\overline{x}-c}^- } < \log(K)\}
				\right] = \frac{K}{\rho+1} \lambda e^{\rho (\log(K)+c - \overline{x})} \\
				&\times \sum_{i=1}^3 C_i e^{\gamma_i c} \left[ \frac{W^{(r)}(x+c-\overline{x})}{W^{(r)}(c)}\frac{1 - e^{-c(\gamma_i+\rho)}}{\gamma_i + \rho} - e^{\gamma_i(x - \overline{x})}\frac{1 - e^{-(x+c-\overline{x})(\gamma_i+\rho)}}{\gamma_i + \rho} \right].\label{V14}
			\end{align}
			Moving on to the penultimate expected value in (\ref{V_3}), observe that
			\begin{equation}
				V_{15}(x,\overline{x}) = 	\Eq\left[ e^{-r\tau_{\overline{x}}^+} \I
				\{\tau_{\overline{x}}^+ < \tau_{\overline{x}+c}^- \} \right] = \frac{W^{(r)}(x+c-\overline{x})}{W^{(r)}(c)}\label{V15}
			\end{equation}
			as in (\ref{V2}) and (\ref{V4}).
			
			Now, similarly to (\ref{A7}), we consider the event
			\begin{align*}
				A_o^{16} = \{& \overline{X}_{\tau_D} \in \dx v,\ v \in (\overline{x}, \infty); D_{\tau_D-} \in \dx y,\ y \in \left[ 0,c \right); \\ \nonumber & D_{\tau_D} - c \in \dx h,\ h \in (v-c-\log(K), \infty)
				\}.
			\end{align*}
			Thus, we can split the last expected value into
			\begin{equation*}
				V_{16}(\overline{x}) =	\E_{\overline{x}}\left[ e^{-r\tau_D} \left( K - e^{X_{\tau_D}} \right)^+ \right] = K\E_{\overline{x}}\left[ e^{-r\tau_D} \I \{ A_o^{16} \} \right] - e^{\overline{x}} \E^\P_{\overline{x}}\left[ \I \{ A_o^{16} \} \right].
			\end{equation*}
			
			We start by calculating the expected value under $\Q$
			\begin{align*}\label{V_K16}
				\E&_{\overline{x}}\left[ e^{-r\tau_D} \I \{ A_o^{16} \} \right] = \int\limits_{\overline{x}}^{\infty} \int\limits_{v-c-\log(K)}^{\infty}\int\limits_{0}^{c}F^\Q(v-(\log(K)+c))   R(r,\dx y) 	\Lambda(y-c-\dx h)   \dx v
				\\
				\nonumber&= \int\limits_{\overline{x}}^{\infty} \int\limits_{v-c-\log(K)}^{\infty}\int\limits_{0}^{c} \eta^\Q e^{-( v - \overline{x} )\eta^\Q} \left[ \frac{W^{(r)'}(y)}{\eta^{\Q}} - W^{(r)}(y) \right] \lambda\rho e^{\rho(y-c-h)} \dx y \dx h \dx v
				\\
				\nonumber&=  \int\limits_{\overline{x}}^{\infty} \eta^\Q  e^{-( v -\overline{x} )\eta^\Q}
				\int\limits_{v-c-\log(K)}^{\infty} \rho e^{ - \rho h} \dx h \dx v \int\limits_{0}^{c} \lambda e^{\rho(y-c)} \left[ \frac{W^{(r)'}(y)}{\eta^{\Q}} - W^{(r)}(y) \right] \dx y
				\\
				\nonumber&= \Gamma_Q \int\limits_{\overline{x}}^{\infty} \eta^\Q  e^{-( v - \overline{x} )\eta^\Q} e^{\rho(\log(K)+c-v)} \dx v  = \Gamma_Q e^{\rho (\log(K)+c) + \eta^\Q \overline{x}}\frac{\eta^\Q}{\eta^\Q + \rho} \left[ -e^{-v\left(\eta^\Q + \rho\right)} \right]^{\infty}_{\overline{x}} \\
				\nonumber&= \frac{\eta^\Q \Gamma_\Q}{\eta^\Q + \rho} e^{\rho( \log(K) + c - \overline{x} )}.	
			\end{align*}
			Redoing the calculation on the $\P$ measure and combining the results together we get
			\begin{equation}
				V_{16}(\overline{x}) = \frac{K}{\rho+1}\frac{\eta^\Q \Gamma_\Q}{\eta^\Q + \rho} e^{\rho( \log(K) + c - \overline{x} )}. \label{V16}
			\end{equation}
		\end{proof}
		
		
		\subsection{Optimal stopping threshold $a^*$}
		To find the optimal level $a$ we choose $a^*$ satisfying condition \eqref{HJB7}, that is 
		\begin{equation}\label{optimalleveleq}\frac{\partial}{\partial x} V_a(x, \overline{x})\big|_{x = a^*}=-e^{a^*}.\end{equation}
		\begin{Thm}
			There exists a unique optimal level $a^*$ satisfying \eqref{optimalleveleq} and it solves the following equation
			\begin{equation}\label{otimala*}
				e^{a^*+c} - rK \frac{W^{(r)}(c)^2}{W^{(r)'}(c)} -  \frac{\eta^\Q e^{a^*}}{\eta^\Q-1}\left( \Delta^\Q + \frac{\rho\Gamma_\Q}{\rho+1} \right)  + \frac{Ke^{\eta^\Q(a^*-\log(K))}}{\eta^\Q-1}\left( \Delta^\Q + \frac{\rho\Gamma_\Q}{\rho+1} \right)
				=0,
			\end{equation}
			where $\eta^\Q$, $\Delta^\Q$, $\Gamma_\Q$ are given in (\ref{def_eta}), (\ref{def_Delta}), (\ref{def_Gamma}), respectively.
		\end{Thm}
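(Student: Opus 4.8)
The plan is to reduce the condition \eqref{optimalleveleq} to an explicit equation in $a^*$ by differentiating the formula for $V_a(x,\overline x)$ from Theorem \ref{valuea}, and then to analyse the resulting scalar equation to establish existence and uniqueness. Note first that the smooth‑paste condition \eqref{HJB7} is only imposed when $b(\overline x)<\overline x-c$, which by Remark \ref{uwaga2} corresponds exactly to the regime $\overline x<a^*+c$. Hence I would work with the expression in part (i) of Theorem \ref{valuea}, namely $V_a(x,\overline x)=V_1(x,\overline x)+V_2(x,\overline x)\bigl(V_3(\overline x)+V_4(\overline x)(V_5+V_6V_7)\bigr)$, and differentiate in $x$ at $x=a$. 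Observe that $V_3,V_4,V_5,V_6,V_7$ do not depend on $x$, so only $V_1$ and $V_2$ must be differentiated.

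The key simplification is that $W^{(r)}(0)=0$ (for the present process with a Gaussian component $W^{(r)}$ is continuous and vanishes at the origin), so $V_2(x,\overline x)\big|_{x=a}=W^{(r)}(0)/W^{(r)}(\overline x-a)=0$ and likewise the term $e^{x}W^{(r)}(x-a)/W^{(r)}(\overline x-a)$ inside $V_1$ contributes nothing at $x=a$. Consequently
\[
\frac{\partial}{\partial x}V_a(x,\overline x)\Big|_{x=a}
=\frac{\partial}{\partial x}V_1(x,\overline x)\Big|_{x=a}
+\frac{W^{(r)\prime}(0)}{W^{(r)}(\overline x-a)}\bigl(V_3(\overline x)+V_4(\overline x)(V_5+V_6V_7)\bigr).
\]
From \eqref{V1} one gets $\frac{\partial}{\partial x}V_1\big|_{x=a}=KrW^{(r)}(0)-e^a+\text{(terms with }W^{(r)}(0)\text{)} = -e^a + W^{(r)\prime}(0)\bigl(e^{\overline x}-KZ^{(r)}(\overline x-a)\bigr)/W^{(r)}(\overline x-a)$, using $Z^{(r)\prime}=rW^{(r)}$ and $Z^{(r)}(0)=1$. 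Plugging this in, the factor $W^{(r)\prime}(0)/W^{(r)}(\overline x-a)$ factors out of all the surviving terms, and after recognising that $V_3(\overline x)+V_4(\overline x)(V_5+V_6V_7)$ is (by construction) the value at $(\overline x,\overline x)$ obtained by restarting, one checks that the bracket combines with $e^{\overline x}-KZ^{(r)}(\overline x-a)$ to cancel the $\overline x$‑dependence entirely. Imposing $\frac{\partial}{\partial x}V_a\big|_{x=a^*}=-e^{a^*}$ then forces the remaining $\overline x$‑free expression to vanish, and after substituting \eqref{delta_gamma} for $\Delta^\Q+\Gamma_\Q$, the identities \eqref{sum_Ci_0}, the formulas \eqref{V5}, \eqref{V6}, \eqref{V7}, and simplifying, this is precisely \eqref{otimala*}.

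For existence and uniqueness I would study the left‑hand side of \eqref{otimala*} as a function $g(a^*)$ on $(-\infty,\log K)$. The first term $e^{a^*+c}$ is increasing; the constant $-rK W^{(r)}(c)^2/W^{(r)\prime}(c)$ shifts it; the term $-\frac{\eta^\Q e^{a^*}}{\eta^\Q-1}(\Delta^\Q+\frac{\rho\Gamma_\Q}{\rho+1})$ is (up to sign, which needs checking) monotone in $e^{a^*}$; and the term $\frac{K e^{\eta^\Q(a^*-\log K)}}{\eta^\Q-1}(\Delta^\Q+\frac{\rho\Gamma_\Q}{\rho+1})$ is monotone in $e^{\eta^\Q a^*}$ with $\eta^\Q>1$ since $W^{(r)\prime}/W^{(r)}$ exceeds $1$ (one shows $\eta^\Q=\sum C_i\gamma_i e^{\gamma_ic}/\sum C_ie^{\gamma_ic}>1$ from $\gamma_1=1<\gamma_2,\gamma_3$ and the sign pattern of the $C_i$, equivalently from $W^{\P\prime}(c)>0$, i.e. $\eta^\P=\eta^\Q-1>0$). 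The plan is to show $g$ is continuous and strictly monotone (strictly increasing) on $(-\infty,\log K)$, with $g\to -rKW^{(r)}(c)^2/W^{(r)\prime}(c)<0$ as $a^*\to-\infty$ and $g(\log K)=Ke^c + (\text{positive}) - rKW^{(r)}(c)^2/W^{(r)\prime}(c)$; a short estimate (for instance comparing with the Black–Scholes limit treated in \cite{naszapraca}, or bounding $rW^{(r)}(c)^2/W^{(r)\prime}(c)<Z^{(r)}(c)\le$ something $<e^c$) shows $g(\log K)>0$, and the intermediate value theorem together with strict monotonicity yields a unique root $a^*\in(-\infty,\log K)$.

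The main obstacle I expect is twofold: first, carrying out the differentiation and the cancellation of all $\overline x$‑dependence cleanly — this is bookkeeping‑heavy because $V_5$ itself is a sum of several exponential terms in $a$ (via $\eta^\Q$, $\Delta^\Q$, $\Gamma_\Q$) and one must verify that after multiplying through by $W^{(r)}(\overline x-a)/W^{(r)\prime}(0)$ nothing depending on $\overline x$ survives; and second, pinning down the sign of the coefficient $\Delta^\Q+\frac{\rho}{\rho+1}\Gamma_\Q$ and establishing strict monotonicity of $g$, since this is what makes the root unique rather than merely existent. I would isolate the monotonicity claim as the technical heart of the argument and prove it by differentiating $g$ and showing $g'(a^*)>0$ using $\eta^\Q>1$ and the sign of $\Delta^\Q+\frac{\rho}{\rho+1}\Gamma_\Q$; the differentiation/cancellation step, while long, is routine once $W^{(r)}(0)=0$ and $Z^{(r)\prime}=rW^{(r)}$ are used systematically.
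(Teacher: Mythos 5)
Your proposal follows essentially the same route as the paper: differentiate the case-(i) formula of Theorem \ref{valuea} at $x=a$ using $W^{(r)}(0)=0$, $W^{(r)'}(0)=2/\sigma^2$, $Z^{(r)}(0)=1$, observe that all $\overline{x}$-dependent terms cancel so that \eqref{optimalleveleq} reduces to $e^{a^*+c}-KZ^{(r)}(c)+V_5+V_6V_7=0$, rewrite via \eqref{delta_gamma} to obtain \eqref{otimala*}, and conclude existence by the intermediate value theorem from the endpoint signs as $a^*\to-\infty$ and at $a^*=\log K$ (the latter via $e^c>Z^{(r)}(c)$). Your additional monotonicity step for uniqueness (which the paper itself does not carry out) does work, but not from $\eta^\Q>1$ and positivity of $\Delta^\Q+\tfrac{\rho}{\rho+1}\Gamma_\Q$ alone: one needs the identity $\Delta^\P+\Gamma_\P=1$, equivalently $\Delta^\Q+\tfrac{\rho}{\rho+1}\Gamma_\Q=e^c(\eta^\Q-1)/\eta^\Q$, which makes the derivative of the left-hand side of \eqref{otimala*} equal to $Ke^{c}e^{\eta^\Q(a^*-\log K)}>0$ and thus settles strict monotonicity.
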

		\begin{proof}
			We start the proof from observing that
			\begin{align*}
				\frac{\partial}{\partial x} V_a(x, \overline{x})
				&= \frac{\partial}{\partial x} V_1(x, \overline{x}) + (V_3(\overline{x}) +  V_4(\overline{x})( V_5 + V_6 V_7 ) )\frac{\partial}{\partial x} V_2(x, \overline{x}).
			\end{align*}
			Furthermore,
			\begin{equation*}
				\frac{\partial}{\partial x} V_1(x, \overline{x})  = K\left( rW^{(r)}(x-a) - Z^{(r)}(\overline{x}-a)\frac{W^{(r)'}(x-a)}{W^{(r)}(\overline{x}-a)} \right) - \left( e^x - \frac{e^{\overline{x}}W^{(r)'}(x-a)}{W^{(r)}(\overline{x}-a)} \right)
			\end{equation*}
			and
			\begin{equation*}
				\frac{\partial}{\partial x} V_2(x, \overline{x})  = \frac{W^{(r)'}(x-a)}{W^{(r)}(\overline{x}-a)}.
			\end{equation*}
			Note that $W^{(r)}(0) = 0$, $W^{(r)'}(0) = \frac{2}{\sigma^2}$, $Z^{(r)}(0) = 1$. Now, by setting $x = a$, we get
			\begin{equation*}
				\frac{\partial}{\partial x} V_1(x, \overline{x})\big|_{x = a}  = \frac{-2K  Z^{(r)}(\overline{x}-a)}{\sigma^2W^{(r)}(\overline{x}-a)}  -  e^a + \frac{2e^{\overline{x}}}{\sigma^2W^{(r)}(\overline{x}-a)}
			\end{equation*}
			and that
			\begin{equation*}
				\frac{\partial}{\partial x} V_2(x, \overline{x})\big|_{x = a}  = \frac{2}{\sigma^2W^{(r)}(\overline{x}-a)}.
			\end{equation*}
			Combining everything together we have
			\begin{align*}
				\frac{\partial}{\partial x} V(x, \overline{x})\big|_{x = a}  &= \frac{-2K  Z^{(r)}(\overline{x}-a)}{\sigma^2W^{(r)}(\overline{x}-a)}  -  e^a + \frac{2e^{\overline{x}}}{\sigma^2W^{(r)}(\overline{x}-a)} +
				\frac{2K  Z^{(r)}(\overline{x}-a)}{\sigma^2W^{(r)}(\overline{x}-a)} - \frac{2KZ^{(r)}(c)}{\sigma^2W^{(r)}(c)} \\
				\nonumber & - \frac{2e^{\overline{x}}}{\sigma^2W^{(r)}(\overline{x}-a)} + \frac{2e^{a+c}}{\sigma^2W^{(r)}(c)} + \frac{2}{\sigma^2W^{(r)}(c)}(V_5 + V_6 V_7) \\
				\nonumber & = -e^a + \frac{2e^{a+c}}{\sigma^2W^{(r)}(c)}\left(e^{a+c} - KZ^{(r)}(c) + V_5 + V_6 V_7 \right).
			\end{align*}
			In order to fulfill condition \eqref{optimalleveleq} (and (\ref{HJB7})) we search for $a^*$ satisfying
			\begin{equation}\label{simple_a}
				e^{a^*+c} - KZ^{(r)}(c) + V_5 + V_6 V_7 = 0.
			\end{equation}
			Now, using (\ref{delta_gamma}), we can rewrite the left-hand side of above equation as follows:
			\begin{align*}
				e^{a^*+c}& - KZ^{(r)}(c) + V_5 + V_6 V_7 = e^{a^*+c} - KZ^{(r)}(c) + Ke^{(a^*-\log(K))\eta^\Q}\frac{\Delta^\Q + \Gamma_\Q \frac{\rho+1-\eta^\Q}{\rho+1}}{\eta^\Q-1} \\
				\nonumber & - e^{a^*} \frac{\eta^\Q}{\eta^\Q-1}\left( \Delta^\Q + \frac{\rho}{\rho+1}\Gamma_\Q \right) + K\left( Z^{(r)}(c) - r\frac{W^{(r)}(c)^2}{W^{(r)'}(c)} \right)  \\
				\nonumber &+ \frac{K\eta^\Q\Gamma_\Q}{\left(\eta^\Q + \rho\right)(\rho+1)}e^{\eta^\Q(a^*-\log(K))} = e^{a^*+c} - rK \frac{W^{(r)}(c)^2}{W^{(r)'}(c)} -  \frac{\eta^\Q e^{a^*}}{\eta^\Q-1}\left( \Delta^\Q + \frac{\rho\Gamma_\Q}{\rho+1} \right)
				\\
				\nonumber & + Ke^{\eta^\Q(a^*-\log(K))}\left( \frac{\Delta^\Q + \Gamma_\Q}{\eta^\Q - 1} - \frac{\eta^\Q}{\eta^\Q-1} \frac{\Gamma_\Q}{\rho + 1} + \frac{\Gamma_\Q\eta^\Q}{(\eta^\Q+\rho)(\rho+1)}\right)
				\\
				\nonumber & = e^{a^*+c} - rK \frac{W^{(r)}(c)^2}{W^{(r)'}(c)} -  \frac{\eta^\Q e^{a^*}}{\eta^\Q-1}\left( \Delta^\Q + \frac{\rho\Gamma_\Q}{\rho+1} \right)  + \frac{Ke^{\eta^\Q(a^*-\log(K))}}{\eta^\Q-1}\left( \Delta^\Q + \frac{\rho\Gamma_\Q}{\rho+1} \right).
			\end{align*}
			This gives the equation \eqref{otimala*}.
			Using the intermediate value theorem we can prove that the solution of equation \eqref{simple_a} always exists. Indeed, observe that by taking $a^* \rightarrow -\infty$, the last expression above
			becomes $rK \frac{W^{(r)}(c)^2}{W^{(r)'}(c)}$, which is smaller than $0$. On the other hand, with $a^* = \log(K)$, we get
			\begin{align*}
				e^{a^*+c}& - KZ^{(r)}(c) + V_5 + V_6 V_7\big|_{a^* = \log(K)} = Ke^c - rK\frac{W^{(r)}(c)^2}{W^{(r)'}(c)} - \frac{K\eta^\Q}{\eta^\Q-1}\left( \Delta^\Q + \frac{\rho\Gamma_\Q}{\rho+1} \right)
				\\
				\nonumber &+ \frac{K}{\eta^\Q-1}\left( \Delta^\Q + \frac{\rho\Gamma_\Q}{\rho+1} \right) = K\left[ e^c - r\frac{W^{(r)}(c)^2}{W^{(r)'}(c)} - \Delta^\Q  \right.
				\\
				\nonumber &+ \left. \Gamma_\Q\left( \frac{\rho}{(\eta^\Q-1)(\eta^\Q+\rho)}  - \frac{\rho \eta^\Q}{(\eta^\Q-1)(\rho+1)} \right) \right] = K\left[ e^c - r\frac{W^{(r)}(c)^2}{W^{(r)'}(c)} - \Delta^\Q - \Gamma_\Q \right.
				\\ \nonumber &+ \left. \frac{\eta^\Q\Gamma_\Q}{(\eta^\Q+\rho)(\rho+1)} \right] = K\left[ e^c - Z^{(r)}(c) + \frac{\eta^\Q\Gamma_\Q}{(\eta^\Q+\rho)(\rho+1)} \right].
			\end{align*}
			Note that $Z^{(r)}(0) = 1$ for the function $Z^{(r)}(x)$ defined in \eqref{Z_def} and hence
			$\sum_{i=1}^{3} \frac{rC_i}{\gamma_1} = 1$. The function $Z^{(r)}(x)$ is then a weighted average of three exponential functions with $C_2, C_3, \gamma_2, \gamma_3 < 0$. Therefore, using fact that $\gamma_1=1$ (see \eqref{eta2}),
			we have $ e^c - Z^{(r)}(c) > 0$ and hence above expression is strictly positive for $a^* = \log(K)$. Thus indeed solution $a^*$ always exists.
		\end{proof}
		
		\subsection{Proof of Theorem \ref{thm2}}
		According to Remark \ref{uwaga2} it is sufficient to show that $\hat{V}(x, \overline{x})=V_{a^*}(x, \overline{x})$ is in the domain of the infinitesimal generator
		$\mathcal{L}$, that is, that $\hat{V}(x, \overline{x})\in \mathcal{C}^2_0(\mathbb{R})$ and that the boundary condition
		\eqref{gen_dom_cut} is satisfied, that is, that
		\[\frac{\partial}{\partial \overline{x}} V_{a^*}(x, \overline{x})=0\quad \text{for}\quad  x = \overline{x}.\]
		Furthermore, we have to verify that all the conditions given in the verification Lemma \ref{HJB} are satisfied.
		
		The fact that $\hat{V}(x, \overline{x})\in \mathcal{C}^2(\mathbb{R})$ follows from Theorem \ref{valuea} and from the form of the scale functions $W^{(r)}(x)$ and $Z^{(r)}(x)$ given
		in \eqref{W_sum} and \eqref{Z_def}. 
		The fact that $\hat{V}(x, \overline{x})$ disappear as $x, \overline{x}$ tend to infinity follows directly from the definition of
		$\hat{V}(x, \overline{x})=V_{a^*}(x, \overline{x})$.
		We will now verify that the condition \eqref{gen_dom_cut} holds true.
		
		We start from the case $\overline{x} < a^*+c$ for which we have
		\begin{align*}
			\frac{\partial}{\partial \overline{x}} \hat{V}(x, \overline{x}) &= \frac{\partial}{\partial \overline{x}} V_1(x, \overline{x}) + V_3(\overline{x})\frac{\partial}{\partial \overline{x}} V_2(x, \overline{x}) + V_4(\overline{x})( V_5 + V_6 V_7 ) \frac{\partial}{\partial \overline{x}} V_2(x, \overline{x}) \\
			\nonumber &+ V_2(x,\overline{x}) \frac{\partial}{\partial \overline{x}} V_3(\overline{x})+ V_2(x, \overline{x})( V_5 + V_6 V_7 )\frac{\partial}{\partial \overline{x}} V_4(\overline{x}).
		\end{align*}
		Note that		
		\begin{equation*}
			\frac{\partial}{\partial \overline{x}} V_2(x, \overline{x}) = \frac{-W^{(r)'}(\overline{x}-a^*)W^{(r)}(x-a^*)}{W^{(r)}(\overline{x}-a^*)^2}
		\end{equation*}		
		and that		
		\begin{equation*}
			\frac{\partial}{\partial \overline{x}} V_4(\overline{x}) = \frac{-W^{(r)'}(\overline{x}-a^*)}{W^{(r)}(c)}.
		\end{equation*}				
		Therefore
		\begin{equation*}
			V_4(\overline{x})\frac{\partial}{\partial \overline{x}} V_2(x, \overline{x}) +  V_2(x, \overline{x})\frac{\partial}{\partial \overline{x}} V_4(\overline{x}) = 0.
		\end{equation*}
		Additionally, we have
		\begin{align*}
			\frac{\partial}{\partial \overline{x}} V_1(x, \overline{x}) &=	K W^{(r)}(x-a^*) \frac{Z^{(r)'}(\overline{x}-a^*)W^{(r)}(\overline{x}-a^*)-W^{(r)'}(\overline{x}-a^*)Z^{(r)}(\overline{x}-a^*)}{W^{(r)}(\overline{x}-a^*)^2} \\
			\nonumber &+ e^{\overline{x}} \frac{W^{(r)}(x-a^*)W^{(r)}(\overline{x}-a^*) - W^{(r)'}(\overline{x}-a^*)W^{(r)}(x-a^*)}{W^{(r)}(\overline{x}-a^*)^2} \\
			\nonumber &= \frac{1}{W^{(r)}(\overline{x}-a^*)^2} \left[ W^{(r)}(\overline{x}-a^*)W^{(r)}(x-a^*)\left( e^{\overline{x}} - rKW^{(r)}(\overline{x}-a^*) \right) \right. \\
			\nonumber & \left. + W^{(r)}(x-a^*)W^{(r)'}(\overline{x}-a^*) \left( KZ^{(r)}(\overline{x}-a^*) - e^{\overline{x}} \right) \right]
		\end{align*}		
		and that
		\begin{equation*}
			\frac{\partial}{\partial \overline{x}} V_3(\overline{x}) = rK W^{(r)}(\overline{x}-a^*) - K\frac{Z^{(r)}(c)}{W^{(r)}(c)}W^{(r)'}(\overline{x}-a^*)  - e^{\overline{x}} + \frac{e^{a^*+c}}{W^{(r)}(c)}W^{(r)'}(\overline{x}-a^*).
		\end{equation*}	
		Finally, putting all terms together,
		\begin{align}
			\frac{\partial}{\partial \overline{x}} \hat{V}(x, \overline{x})=\frac{\partial}{\partial \overline{x}}& V_1(x, \overline{x}) + V_3(\overline{x})\frac{\partial}{\partial \overline{x}} V_2(x, \overline{x}) +  V_2(x, \overline{x})\frac{\partial}{\partial \overline{x}} V_3(\overline{x}) =  \frac{-W^{(r)'}(\overline{x}-a^*)W^{(r)}(x-a^*)}{W^{(r)}(\overline{x}-a^*)^2}
			\\
			\nonumber &\times \left[ K\left( Z^{(r)}(\overline{x}-a^*) - \frac{Z^{(r)}(c)}{W^{(r)}(c)}W^{(r)}(\overline{x}-a^*)\right) - e^{\overline{x}}  + \frac{e^{a^*+c}W^{(r)}(\overline{x}-a^*)}{W^{(r)}(c)}  \right] \\
			\nonumber & + \frac{W^{(r)}(x-a^*)}{W^{(r)}(\overline{x}-a^*)} \left[ rKW^{(r)}(\overline{x}-a^*) - K\frac{Z^{(r)}(c)}{W^{(r)}(c)}W^{(r)'}(\overline{x}-a^*) - e^{\overline{x}} + \frac{e^{a^*+c}}{W^{(r)}(c)}W^{(r)'}(\overline{x}-a^*) \right] \\
			\nonumber & + \frac{1}{W^{(r)}(\overline{x}-a^*)^2} \left[ W^{(r)}(\overline{x}-a^*)W^{(r)}(x-a^*)\left( e^{\overline{x}} - rKW^{(r)}(\overline{x}-a^*) \right) \right. \\
			\nonumber & \left. + W^{(r)}(x-a^*)W^{(r)'}(\overline{x}-a^*) \left( KZ^{(r)}(\overline{x}-a^*) - e^{\overline{x}} \right) \right].
		\end{align}
		
		When we set $x = \overline{x}$, all terms cancel out and the condition \eqref{gen_dom_cut} is fulfilled.
		

		We consider now the case of $a^*+c < \overline{x} < \log(K) + c$. In this case we have
		\begin{align*}
			\frac{\partial}{\partial \overline{x}} \hat{V}(x, \overline{x}) &= \frac{\partial}{\partial \overline{x}} V_{10}(x, \overline{x}) + (V_{12}(\overline{x}) + V_{13}(\overline{x})V_7)\frac{\partial}{\partial \overline{x}} V_{11}(x, \overline{x}) + V_{11}(x, \overline{x})\left(\frac{\partial}{\partial \overline{x}}V_{12}(\overline{x}) + 	\frac{\partial}{\partial \overline{x}}V_{13}(\overline{x})V_7\right).
		\end{align*}		
		Further,
		\begin{align*}
			\frac{\partial}{\partial \overline{x}} V_{10}(x, \overline{x})\big|_{x = \overline{x}} = -rKW^{(r)}(c) + \eta^\Q \left( KZ^{(r)}(c) - e^{\overline{x}} \right) + e^{\overline{x}}
		\end{align*}
		and
		\begin{equation*}
			\frac{\partial}{\partial \overline{x}} V_{11}(x, \overline{x})\big|_{x = \overline{x}} = - \frac{W^{(r)'}(c)}{W^{(r)}(c)} = -\eta^\Q, \quad V_{11}(\overline{x}, \overline{x}) = 1.
		\end{equation*}
		Moreover, we have
		\begin{equation*}
			\frac{\partial}{\partial \overline{x}}V_{13}(\overline{x}) = \eta^\Q e^{-\eta^\Q (\log(K) + c - \overline{x})} = \eta^\Q V_{13}(\overline{x})
		\end{equation*}
		and
		\begin{equation*}
			V_7 V_{13}(\overline{x})\frac{\partial}{\partial \overline{x}} V_{11}(x, \overline{x})\big|_{x = \overline{x}} -  V_7V_{11}(\overline{x}, \overline{x})\frac{\partial}{\partial \overline{x}}V_{13}(\overline{x}) = 0.
		\end{equation*}
		Furthermore,		
		\begin{align*}
			\frac{\partial}{\partial \overline{x}}V_{12}(\overline{x}) &= K\frac{\eta^\Q}{\eta^\Q-1}\left( \Delta^\Q + \Gamma_\Q \right)e^{-(\log(K)+c-\overline{x})\eta^\Q}   - \frac{\eta^\Q}{\eta^\Q-1}\left[ e^{\overline{x}-c}\Delta^\Q + \frac{\Gamma_Q}{\rho+1}\left( \rho e^{\overline{x}-c} +  K\eta^\Q e^{-(\log(K)+c-\overline{x})\eta^\Q} \right)  \right]
			\\
			\nonumber & = \frac{\eta^\Q}{\eta^\Q-1} \left[ Ke^{-(\log(K)+c-\overline{x})\eta^\Q}\left( \Delta^\Q + \Gamma_\Q\left( 1 - \frac{\eta^\Q}{\rho+1} \right) \right) - e^{\overline{x}-c}\left( \Delta^\Q + \frac{\rho}{\rho+1}\Gamma_\Q \right) \right].
		\end{align*}
		This gives
		\begin{align*}
			\frac{\partial}{\partial \overline{x}} \hat{V}(x, \overline{x}) =\frac{\partial}{\partial \overline{x}} &V_{10}(x, \overline{x})\big|_{x = \overline{x}} + V_{12}(\overline{x})\frac{\partial}{\partial \overline{x}} V_{11}(x, \overline{x})\big|_{x = \overline{x}} + V_{11}(\overline{x}, \overline{x})\frac{\partial}{\partial \overline{x}}V_{12}(\overline{x})
			\\
			\nonumber & = -rKW^{(r)}(c) + e^{\overline{x}} + \eta^\Q \left[ KZ^{(r)}(c) - e^{\overline{x}} - K\left(
			\Delta^\Q + \Gamma_Q \right) +  e^{\overline{x}-c}\left( \Delta^\Q + \frac{\rho}{\rho+1}\Gamma_Q \right) \right]
			\\
			\nonumber & = -rKW^{(r)}(c) + e^{\overline{x}} + \eta^\Q \left[ K\left( Z^{(r)}(c) - Z^{(r)}(c) + \frac{rW^{(r)}(c)}{\eta^\Q} \right) - e^{\overline{x}}  +  e^{\overline{x}-c}\left( \Delta^\Q + \frac{\rho}{\rho+1}\Gamma_Q \right) \right]
			\\
			\nonumber & = e^{\overline{x}}\left(1 - \eta^\Q\right) + \eta^\Q e^{\overline{x}-c}\left( \Delta^\Q + \frac{\rho}{\rho+1}\Gamma_Q \right) = e^{\overline{x}} \left[ 1 - \eta^\Q + \eta^\Q e^{-c}\left( \Delta^\Q + \frac{\rho}{\rho+1}\Gamma_Q \right) \right]
			\\
			\nonumber & = e^{\overline{x}} \left[ 1 - \eta^\Q + (\eta^\Q-1)\left( \frac{\eta^\Q e^{-c}}{\eta^\Q-1} \Delta^\Q + \frac{\rho}{\rho+1}\frac{\eta^\Q  e^{-c}}{\eta^\Q-1} \Gamma_Q \right) \right] = e^{\overline{x}} \left[ 1 - \eta^\Q + (\eta^\Q-1)\left( \Delta^\P + \Gamma_\P \right) \right]
			\\
			\nonumber & = e^{\overline{x}} \left[ 1 - \eta^\Q + (\eta^\Q-1)\left( \Delta^\P + \Gamma_\P \right) \right] = e^{\overline{x}} (1 - \eta^\Q + \eta^\Q-1) = 0.
		\end{align*}
		Thus condition \eqref{gen_dom_cut} is satisfied.
		
		We analyse now the remaining case of $\log(K) + c < \overline{x}$.
		In this case we have
		\begin{align*}
			\frac{\partial}{\partial \overline{x}} \hat{V}(x, \overline{x}) &= \frac{\partial}{\partial \overline{x}} V_{14}(x, \overline{x}) +  V_{15}(x, \overline{x}) \frac{\partial}{\partial \overline{x}} V_{16}(\overline{x}) + V_{16}(\overline{x}) \frac{\partial}{\partial \overline{x}}V_{15}(x, \overline{x}).
		\end{align*}		
		Further,
		\begin{align*}
			\frac{\partial}{\partial \overline{x}}& V_{14}(x, \overline{x}) = -\rho V_{14}(x, \overline{x}) +	\frac{K}{\rho+1} \lambda e^{\rho (\log(K)+c - \overline{x})} \\
			&\times \sum_{i=1}^3 C_i e^{\gamma_i c} \left[ \frac{-W^{(r)'}(x+c-\overline{x})}{W^{(r)}(c)}\frac{1 - e^{-c(\gamma_i+\rho)}}{\gamma_i + \rho} - \frac{-\gamma_i e^{\gamma_i(x - \overline{x})} - \rho e^{-\gamma_i c - \rho(x+c) + \rho\overline{x}}}{\gamma_i + \rho} \right]
		\end{align*}
		Using (\ref{sum_Ci_0}) we get
		\begin{align*}
			\frac{\partial}{\partial \overline{x}}& V_{14}(x, \overline{x})\big|_{x = \overline{x}} = \frac{K \lambda e^{\rho (\log(K)+c - \overline{x})}}{\rho+1}\sum_{i=1}^3 \frac{C_i e^{\gamma_i c}}{\gamma_i + \rho}\left[- \eta^\Q + \gamma_i +  (\rho + \eta^\Q)e^{-c(\gamma_i + \rho)} \right]
			\\
			& = \frac{K \lambda e^{\rho (\log(K)+c - \overline{x})}}{\rho+1} \left[ \eta^\Q\sum_{i=1}^3 \frac{C_i e^{\gamma_i c}}{\gamma_i + \rho}\left( \frac{\gamma_i}{\eta^\Q} - 1 \right) + \sum_{i=1}^3 \frac{C_i(\rho + \eta^\Q)e^{-c\rho}}{\gamma_i + \rho} \right]
			\\
			& = \frac{K\eta^\Q e^{\rho (\log(K)+c - \overline{x})}}{\rho + 1}\Gamma_\Q = (\eta^\Q + \rho) V_{16}(\overline{x}).
		\end{align*}
		Additionally, we have
		\begin{equation*}
			\frac{\partial}{\partial \overline{x}}V_{15}(x, \overline{x}) = \frac{-W^{(r)'}(x + c - \overline{x})}{W^{(r)}(c)}, \quad \frac{\partial}{\partial \overline{x}}V_{15}(x, \overline{x})\big|_{x = \overline{x}} = -\eta^\Q
		\end{equation*}		
		and
		\begin{equation*}
			\frac{\partial}{\partial \overline{x}}V_{16}(\overline{x}) = -\rho V_{16}(\overline{x}).
		\end{equation*}
		This gives		
		\begin{equation*}
			\frac{\partial}{\partial \overline{x}} \hat{V}(x, \overline{x})\big|_{x = \overline{x}} = (\eta^\Q + \rho) V_{16}(\overline{x}) - \eta^\Q V_{16}(\overline{x}) -\rho V_{16}(\overline{x}) = 0
		\end{equation*}	
		which completes the proof of condition \eqref{gen_dom_cut}.

		In the final step, we will verify that all the conditions of the verification Lemma \ref{HJB} are satisfied.
		
		First, observe that the scale functions $W^{(r)}(x)$ and $Z^{(r)}(x)$ defined as in (\ref{W_sum}) and (\ref{Z_def}) fulfill condition (\ref{HJB1}). Additionally, $e^x$ also satisfies this condition, since $\gamma_1 = 1$. Now, if $x > a^*$, then for all three cases specified in Theorem \ref{valuea}, function $V_{a^*}(x,\overline{x})$ is just a combination of $W^{(r)}(x)$, $Z^{(r)}(x)$ and $e^x$. Thus, condition (\ref{HJB1}) holds.
		
		Now, from the fourth case specified in Theorem \ref{valuea} we can see that $(\mathcal{L} V_{a^*} - rV_{a^*})(x, \overline{x}) = -rK$ when $x \leq a^*$. Therefore, condition (\ref{HJB1b}) is satisfied as well. 
The form of function $V_{a^*}(x, \overline{x})$ defined in Theorem \ref{valuea} makes equation (\ref{HJB3}) directly to be satisfied for $x \leq a^*$.
		
		Let us first handle the smooth paste conditions before moving on to the fourth condition of the HJB system. Note that equality (\ref{HJB7}) is true due to our choice of $a^*$ solving (\ref{simple_a}). Furthermore, note that $V_1(a^*,\overline{x}) = K - e^{a^*}$ 
and $V_2(a^*,\overline{x}) = 0$, which implies that condition (\ref{HJB6}) is satisfied.
		
		Finally, to prove (\ref{HJB4}), we need to ensure that $V_{a^*}(x,\overline{x})$ dominates over the gain function.
		
Proposition \ref{thm1} implies that the stopping region is reached by $X_t$ process from above. On the other hand, from conditions (\ref{HJB6}) and (\ref{HJB7}) we know that $\tau_{a^*}^-$ is the first time when the candidate for the value function equals its payoff. 
Let us consider the pair $(x, \overline{x})=(a^*+c, a^*+c)$. Note that in this case, we have $V_1(a^*+c, a^*+c) = 0$, $V_2(a^*+c, a^*+c) = 1$, $V_3(a^*+c) = 0$ and $V_4(a^*+c) = 1$. Therefore, from (\ref{simple_a}) we get that $V_{a^*}(a^*+c, a^*+c) =  KZ^{(r)}(c) - e^{a^*+c}$. 
On the other hand, the immediate payout if equal to $K - e^{a^*+c}$. By (\ref{Z_def}) it immediately follows that $Z(x) > 1$ for all $x>0$ which makes condition (\ref{HJB4}) satisfied.
		
		\hfill$\square$	
		\section{Numerical analysis}\label{sec:num}
		
		In this section, we analyze several properties of options capped by drawdown. In Figure \ref{Smooth_paste}, we present the smooth-pasting of the payoff and value functions, defined by equations (\ref{HJB6}) and (\ref{HJB7}). The term "Projection", mentioned in the legend, refers to the form of the function $V$, defined on the continuation region, applied to the stopping region. The chart confirms that the smooth-paste condition indeed holds.
		
		Next, Figure \ref{Price} illustrates how the option price depends on the initial values $X_0 = x$ and $\overline{X}_0 = \overline{x}$. Half of the chart is set to zero due to domain limitation $x \leq \overline{x}$. The plot reveals an interesting behavior of the function, explained below, when both $x$ and $\overline{x}$ are sufficiently high. 
Figure \ref{PriceCU} shows the same function, zoomed into the lower-right corner.
		
		The first notable feature is the non-differentiability of the function at $\overline{x} = \log(K) + c$, when $x \in (\log(K), \log(K) + c)$. The explanation is straightforward: if $\overline{x} < \log(K) + c$, a non-zero payout can still be achieved via diffusion, whereas for $\overline{x} > \log(K) + c$, it can only result from a Poissonian jump. Consequently, the structure of the value function changes, leading to non-differentiability.
		
		Additionally, we observe that for $\overline{x} > \log(K) + c$, the value function is zero when $x < \overline{x} - c$, which is expected, as the option is out of the money and immediately stopped by the drawdown. However, for $x > \overline{x} - c$, the value function increases with respect to $x$, which might be surprising, since for $\overline{x} < \log(K) + c$ it behaves in the opposite manner. This can be explained by the fact that the higher the value of $x$, the longer it takes for the drawdown to occur. As a result, the probability of observing a downward jump that causes $x$ to fall below $\log(K)$ increases.
		
		Next, we perform a sensitivity analysis of both the stopping barrier and the option price with respect to the volatility $\sigma$, the risk-free rate $r$, the jump size parameter $\rho$, and the jump frequency parameter $\lambda$.
		
		In Figure \ref{A_r_sigma}, we examine the optimal barrier $e^{a^*}$ of the underlying asset price process. It is evident that the barrier increases with higher values of $r$ and lower values of $\sigma$, indicating that an increase in the drift parameter of $X_t$ results in an upward shift of the barrier.
		
		Figure \ref{V_r_sigma} presents the sensitivities of the option price with respect to the risk-free rate and volatility. It is clear that the highest price is obtained for the lowest values of $r$ and $\sigma$. However, it is also evident that for a high value of $r = 0.5$, the value function increases with $\sigma$. This suggests that the impact of volatility on the option price depends on the values of the other parameters.
		
		A similar analysis is conducted for $\rho$ and $\lambda$. Figure \ref{A_rho_lambda} shows that the optimal stopping barrier decreases with increasing $\lambda$ and decreasing $\rho$. This is intuitive, as a smaller $\rho$ implies a larger mean jump size. Likewise, the option price, presented in Figure \ref{V_rho_lambda}, is highest for the largest value of $\lambda$ and the smallest value of $\rho$, which is sensible, since in such a scenario the likelihood of the process $X_t$ dropping significantly below $\log(K)$ increases. Figures \ref{V_rho} and \ref{V_lambda} present side views of the chart from Figure \ref{V_rho_lambda}. There, we observe that the option price grows exponentially as $\rho$ decreases and linearly as $\lambda$ increases. This behavior can be explained by the fact that while $\lambda$ controls the frequency of jumps, $\rho$ exponentially influences the jump sizes, affecting the payout after a jump occurs.
		
		\begin{figure}
			\includegraphics[width=\textwidth]{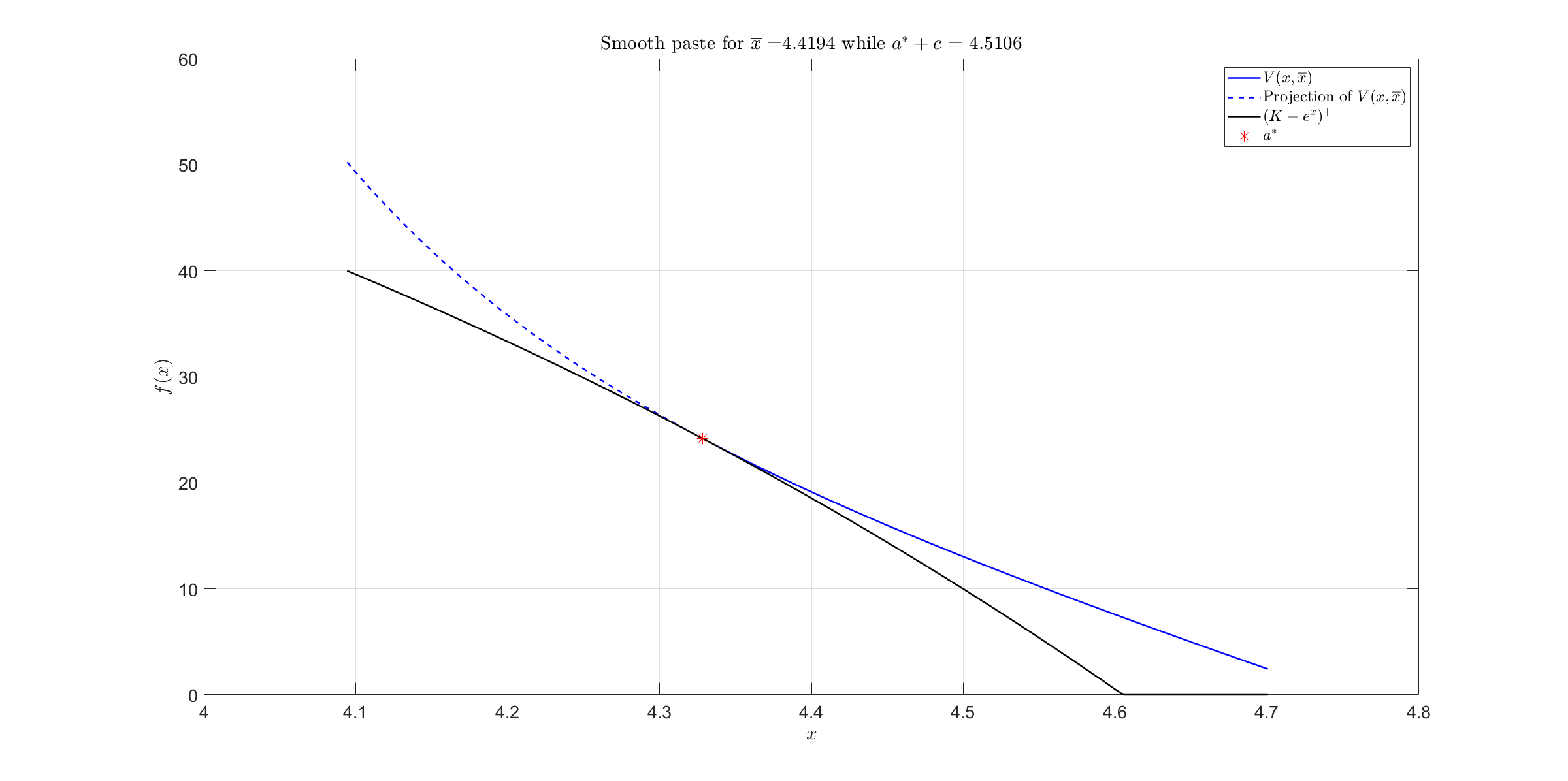}
			\caption{Smooth paste of the option price $V$ and the option payoff. Parameters of the model: $r = 0.1,\ \sigma = 0.2,\ e^c = 1.2,\ \rho = 3,\ \lambda = 0.2,\ K = 100$.}
			\label{Smooth_paste}
		\end{figure}

		\begin{figure}
			\includegraphics[width=\textwidth]{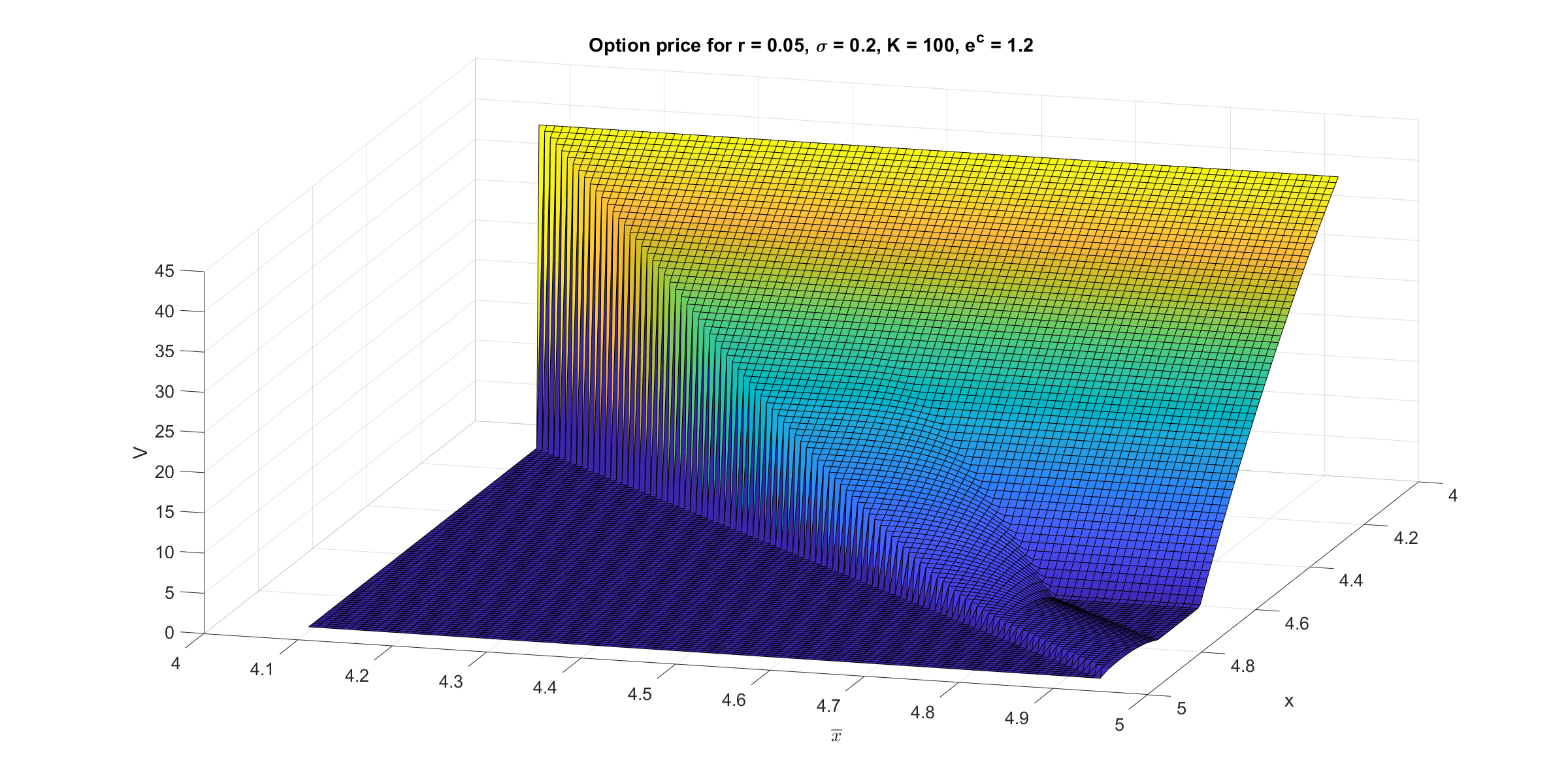}
			\caption{Option price depending on $x$ and $\overline{x}$. Note that the function is not defined for $\overline{x} > x$.}
			\label{Price}
		\end{figure}
		
		\begin{figure}
			\includegraphics[width=\textwidth]{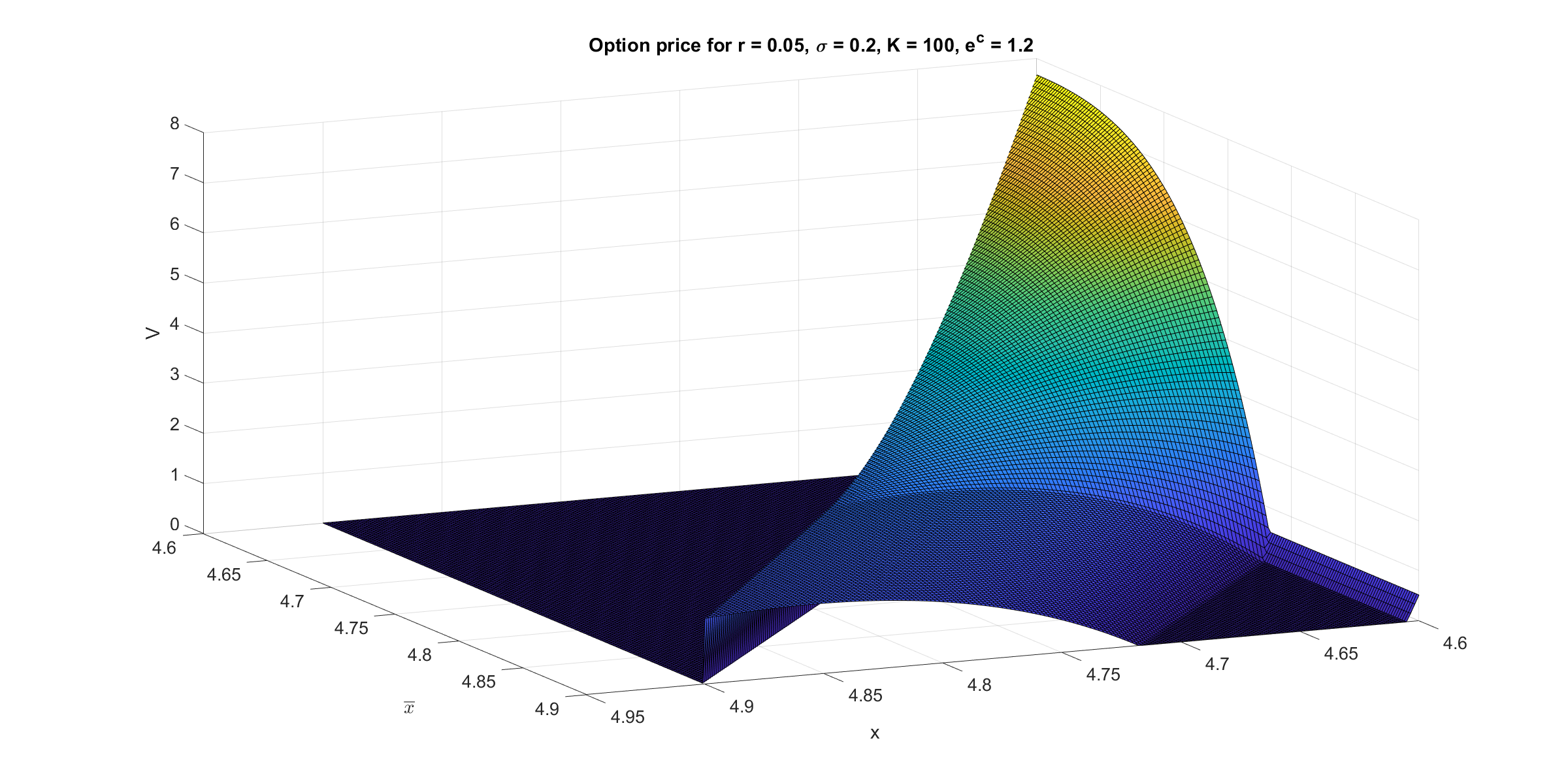}
			\caption{Option price depending on $x$ and $\overline{x}$. Here, non-differentiability is clearly visible for $s = \log(K) + c$.}
			\label{PriceCU}
		\end{figure}
		
		\begin{figure}
			\includegraphics[width=\textwidth]{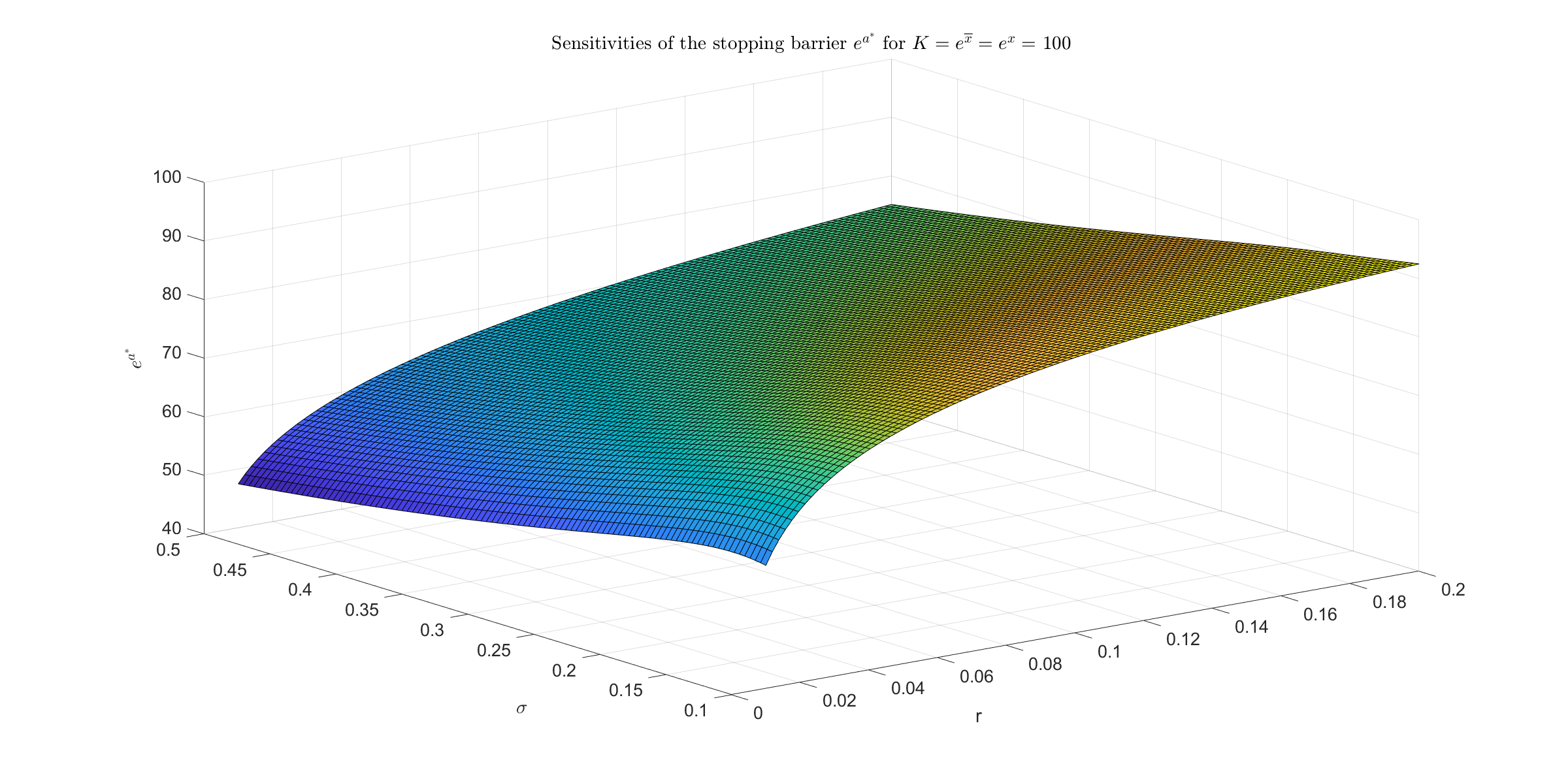}
			\caption{Stopping barrier $e^{a^*}$ of underlying asset price process $S_t$ depending on the risk-free rate $r$ and the volatility $\sigma$.}
			\label{A_r_sigma}
		\end{figure}
		
		\begin{figure}
			\includegraphics[width=\textwidth]{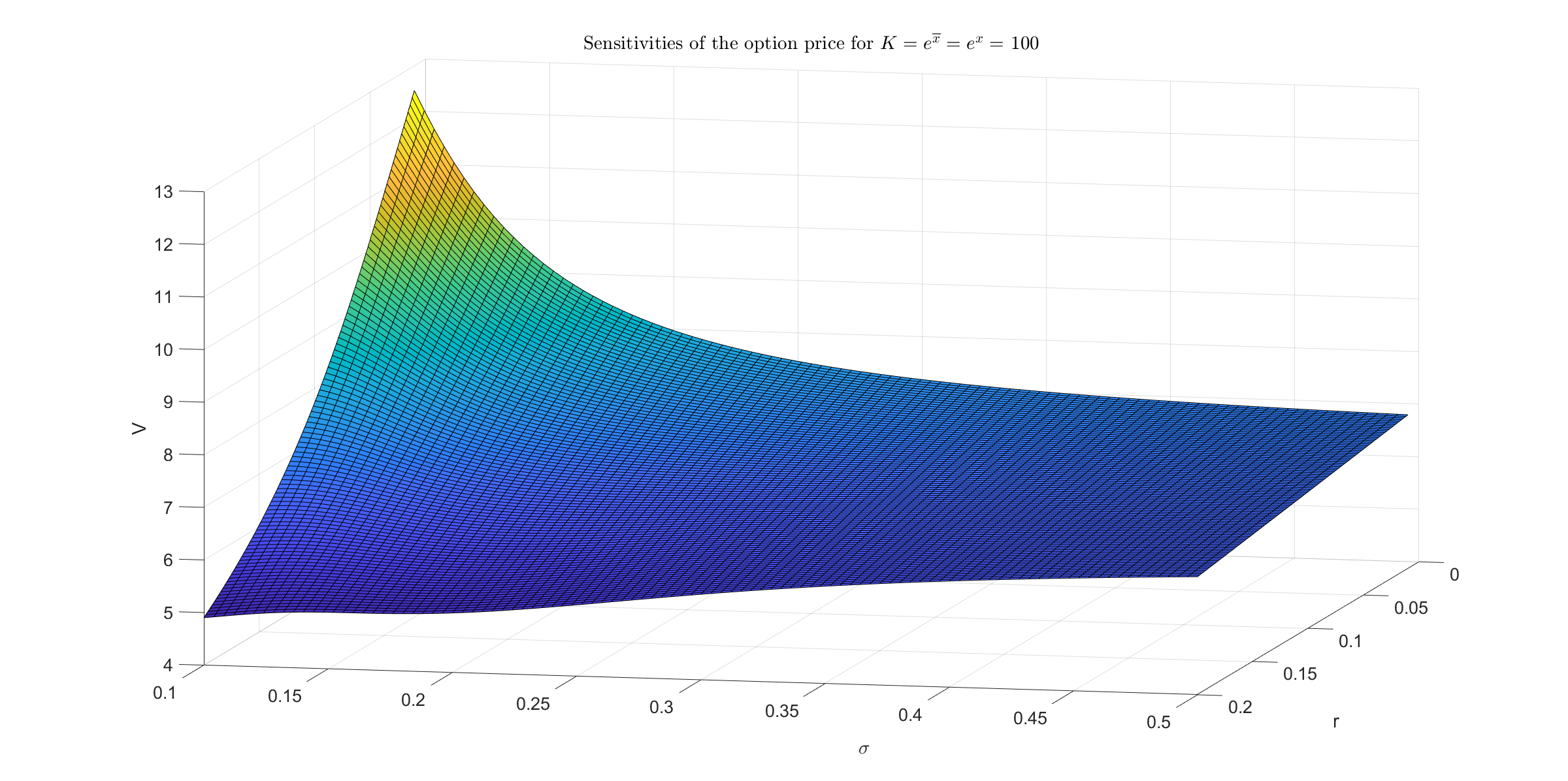}
			\caption{Sensitivities of the option price depending on the risk-free rate $r$ and the volatility $\sigma$.}
			\label{V_r_sigma}
		\end{figure}
		
		\begin{figure}
			\includegraphics[width=\textwidth]{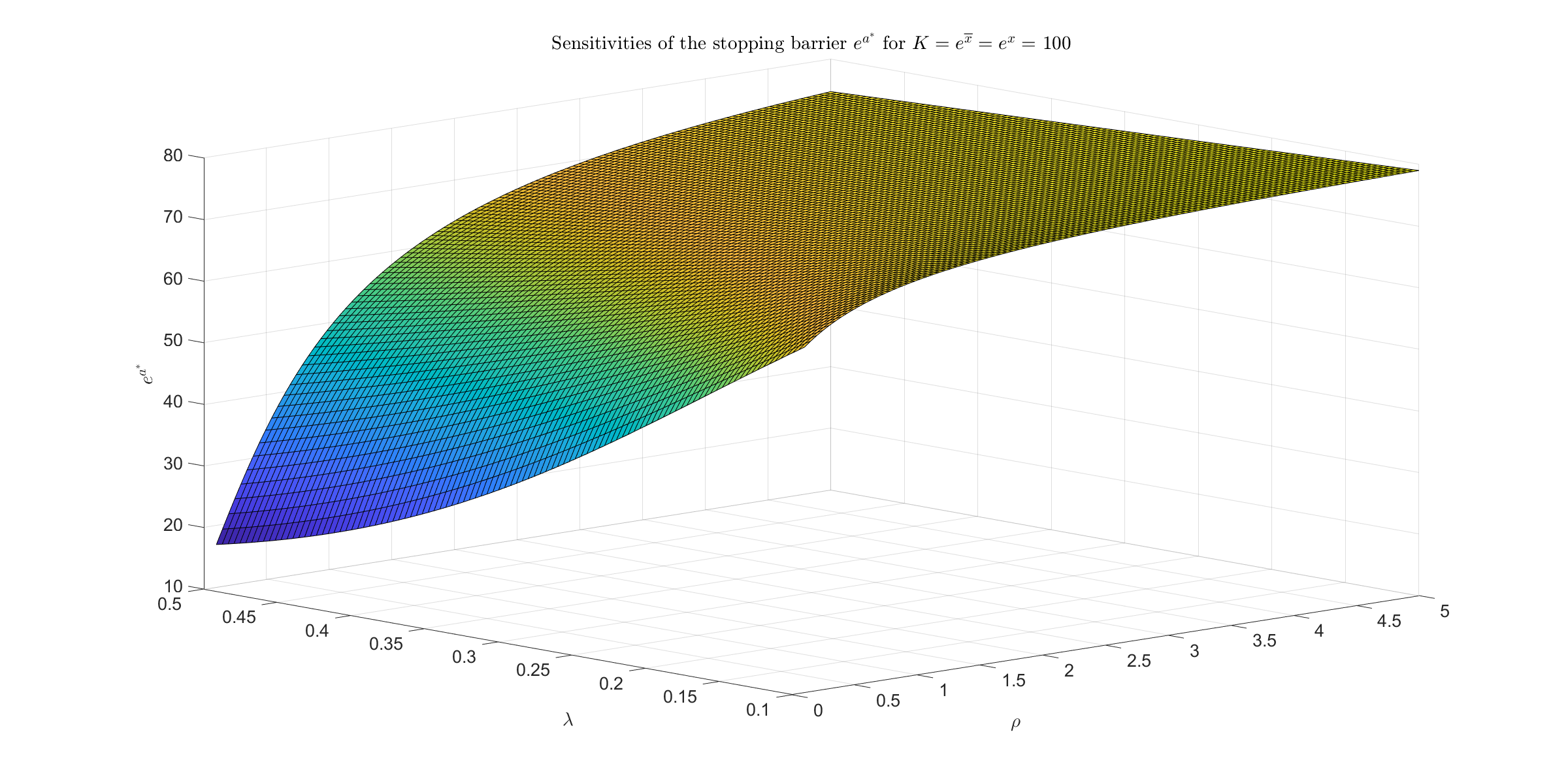}
			\caption{Stopping barrier $e^{a^*}$ of underlying asset price process $S_t$ depending on the jump size $\rho$ and the jump intensity $\lambda$.}
			\label{A_rho_lambda}
		\end{figure}
		
		\begin{figure}
			\includegraphics[width=\textwidth]{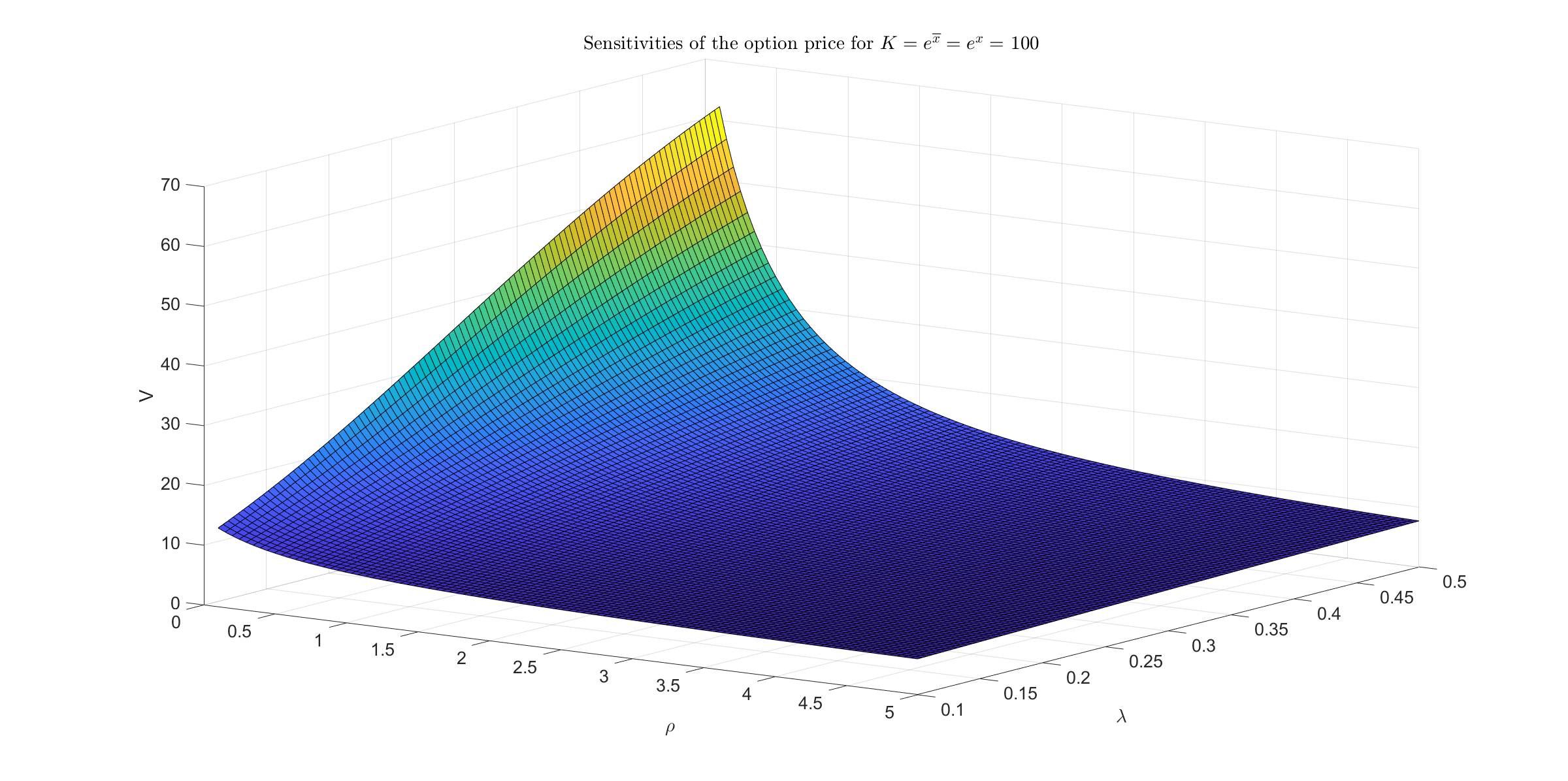}
			\caption{Sensitivities of the option price depending on the jump size $\rho$ and the jump intensity $\lambda$.}
			\label{V_rho_lambda}
		\end{figure}
		
		\begin{figure}
			\includegraphics[width=\textwidth]{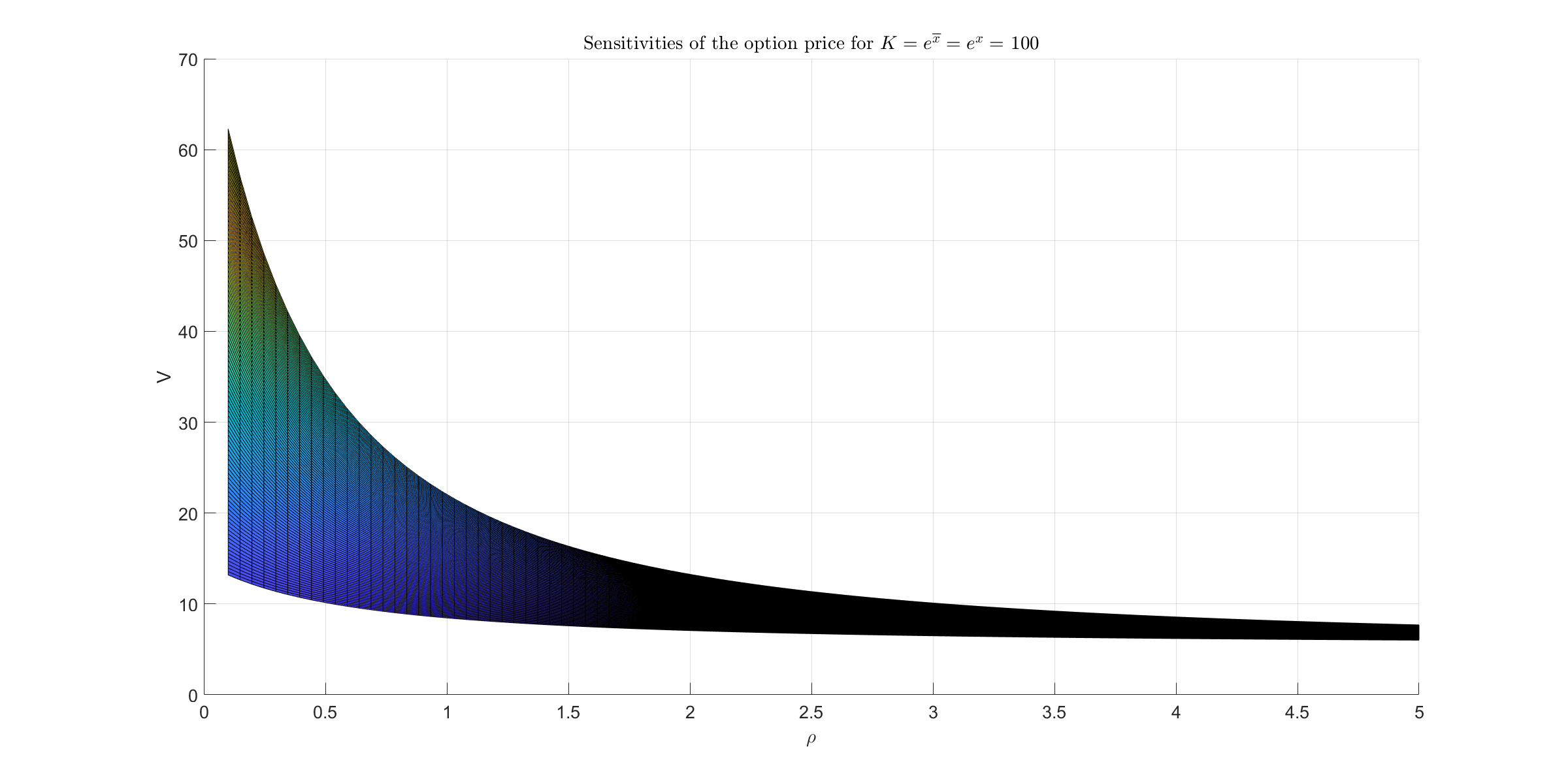}
			\caption{Sensitivities of the option price depending on the jump size $\rho$ and the jump intensity $\lambda$.}
			\label{V_rho}
		\end{figure}
		
		\begin{figure}
			\includegraphics[width=\textwidth]{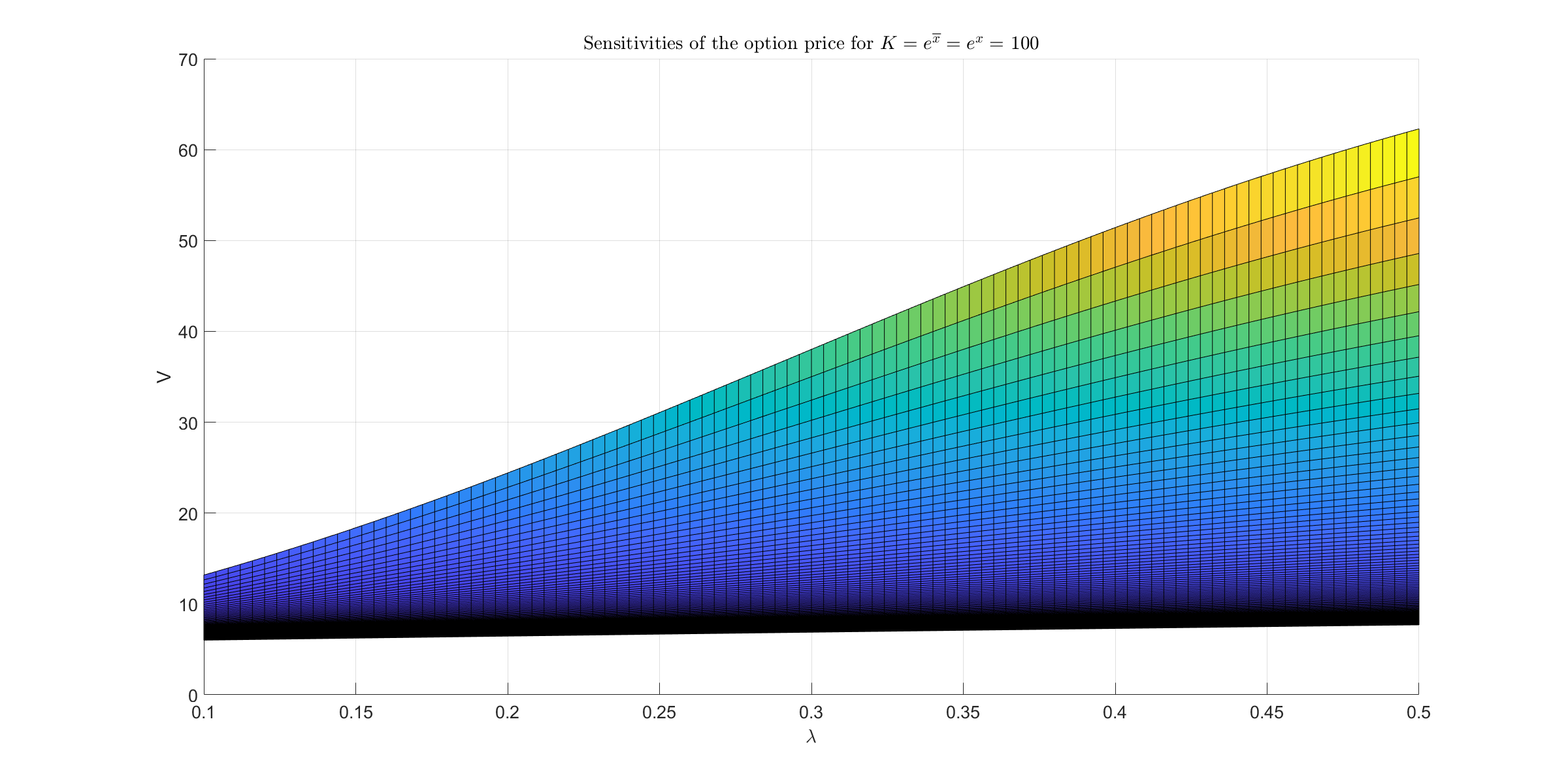}
			\caption{Sensitivities of the option price depending on the jump size $\rho$ and the jump intensity $\lambda$.}
			\label{V_lambda}	
		\end{figure}

		\newpage

	\end{document}